\newcommand{\mathletter}[1]{\mathbf{#1}}
\newcommand{\N}{\mathletter{N}}				
\newcommand{\Z}{\mathletter{Z}}				
\newcommand{\Q}{\mathletter{Q}}				
\newcommand{\F}{\mathletter{F}}				
\newcommand{\A}{\mathletter{A}}
\newcommand{\mfrak}{\mathfrak{m}}
\newcommand{\pfrak}{\mathfrak{p}}
\renewcommand{\to}[1][]{\overset{#1}{\rightarrow}}		
\newcommand{\To}[1][]{\overset{#1}{\longrightarrow}}	
\newcommand{\inj}[1][]{\overset{#1}{\hookrightarrow}}		
\newcommand{\inv}{^{-1}}								
\newcommand{\skp}[1]{\langle #1\rangle}				
\DeclareMathOperator{\Ho}{H}					
\DeclareMathOperator{\id}{id}				
\DeclareMathOperator{\Tor}{Tor}				
\DeclareMathOperator{\K}{K}						
\DeclareMathOperator{\KH}{KH}					
\DeclareMathOperator{\Ge}{G}						
\newcommand{\KZR}{\K^\mathrm{RZ}}				
\newcommand{\df}[1]{\textbf{#1}}					
\renewcommand{\phi}{\varphi}
\renewcommand{\epsilon}{\varepsilon}
\renewcommand{\rho}{\varrho}
\theoremstyle{definition}				
	\newtheorem{defin}{Definition}[section]
	\newtheorem{example}[defin]{Example}
	\newtheorem{remark}[defin]{Remark}
	\newtheorem{question}[defin]{Question}
	\newtheorem*{notation*}{Notation}
	\newtheorem{notation}[defin]{Notation}
\theoremstyle{plain} 					
	\newtheorem{thm}[defin]{Theorem}
	\newtheorem*{thm*}{Theorem}
	\newtheorem*{obs*}{Observation}
	\newtheorem{prop}[defin]{Proposition}
	\newtheorem{lemma}[defin]{Lemma}
	\newtheorem{cor}[defin]{Corollary}
	\newtheorem*{cor*}{Corollary}
		\newcounter{zaehler}
\theoremstyle{remark} 					
\newcounter{formulanumber}[defin]
\DeclareMathOperator*{\colim}{colim}
\newcommand{\mymathcal}[1]{\mathcal{#1}}
\renewcommand{\O}{\mathcal{O}}
\newcommand{\Mcal}{\mymathcal{M}}
\DeclareMathOperator{\RZ}{RZ}
\newcommand{\cdh}{\mathrm{cdh}}
\newcommand{\op}{^\mathrm{op}}
\newcommand{\mathcat}[1]{\mathrm{#1}}
\newcommand{\Vect}{\mathcat{Vec}}
\newcommand{\Mod}{\mathcat{Mod}}
\newcommand{\PCoh}{\mathcat{PCoh}}
\newcommand{\Mdf}{\mathcat{Mdf}}
\newcommand{\Open}{\mathcat{Open}}
\newcommand{\Model}{\mathcat{Model}}
\newcommand{\mathinfcat}[1]{\mathbf{#1}}
\newcommand{\Sp}{\mathinfcat{Sp}}
\newcommand{\Sh}{\mathinfcat{Sh}}
\newcommand{\Perf}{\mathinfcat{Perf}}
\newcommand{\RModinf}{\mathinfcat{RMod}}
\DeclareMathOperator{\Spec}{Spec}		
\newcommand{\carre}[4]{\[\begin{xy}\xymatrix{#1\ar[r]\ar[d]&#2\ar[d]\\#3\ar[r]&#4}\end{xy}\]}
\newcommand{\carrelift}[4]{\[\begin{xy}\xymatrix{#1\ar[r]\ar[d]&#2\ar[d]\\#3\ar[r]\ar@{.>}[ur]&#4}\end{xy}\]}
\newcommand{\carretag}[5][]{\[\tag{#1}\begin{xy}\xymatrix{#2\ar[r]\ar[d]&#3\ar[d]\\#4\ar[r]&#5}\end{xy}\]}
\newcommand{\carremap}[8]{\[\begin{xy}\xymatrix{#1\ar[r]^{#2}\ar[d]_{#3}&#4\ar[d]^{#5}\\#6\ar[r]_{#7}&#8}\end{xy}\]}
\newcommand{\lift}[3]{\[\begin{xy}\xymatrix{&#1\ar[d]\\#2\ar[r]\ar@{.>}[ur]&#3}\end{xy}\]}
\newcommand{\liftmap}[6]{\[\begin{xy}\xymatrix{&#1\ar[d]^{#2}\\#3\ar@{.>}[ur]^{#4}\ar[r]_{#5}&#6}\end{xy}\]}
\title{Regularity of semi-valuation rings and homotopy invariance of algebraic K-theory}
\author{Christian Dahlhausen}
\address{Institut für Mathematik, Universität Heidelberg, Im Neuenheimer Feld 205, 69120 Heidelberg, Germany}
\email{cdahlhausen@mathi.uni-heidelberg.de}
\urladdr{cdahlhausen.eu}
\begin{document}
\begin{abstract}
We show that the algebraic K-theory of semi-valuation rings with stably coherent regular semi-fraction ring satisfies homotopy invariance. Moreover, we show that these rings are regular if their valuation is non-trivial. Thus they yield examples of regular rings which are not homotopy invariant for algebraic K-theory. On the other hand, they are not necessarily coherent, so that they provide a class of possibly non-coherent examples for homotopy invariance of algebraic K-theory. As an application, we show that Temkin's relative Riemann-Zariski spaces also satisfy homotopy invariance for K-theory under some finiteness assumption.\\
\noindent \textsc{Keywords.} K-Theory, Regularity, Homotopy invariance, Semi-valuation rings, Riemann-Zariski spaces.\\
\noindent \textsc{Mathematical Subject Classification 2020.} 19E08, 19D35.
\end{abstract}
\maketitle
\setcounter{tocdepth}{1}
\tableofcontents

\section{Introduction}

Algebraic K-theory of rings is not $\A^1$-invariant in general, but homotopy invariance is known for some classes of rings, for instance:
\begin{enumerate}[leftmargin=36pt]
\item stably coherent regular rings (e.g.\ noetherian regular, valuation, or Prüfer rings)
\item perfect $\F_p$-algebras
\item certain rings arising from $C^*$-algebras
\item certain rings of continuous functions
\end{enumerate}
Here we use a very general notion of regularity generalising the usual one in the noetherian context, see Definition~\ref{regular--def}.
In this note, we expand this list by certain semi-valuation rings (Definition~\ref{semi-valuation-ring--def}) which can be non-coherent (Lemma~\ref{not-coherent--lem} and Lemma~\ref{not-coherent-2--lem}).

\begin{thm*}[Corollary~\ref{K-semi-valuation-ring--cor}]
Let $(A^+,\pfrak)$ be a semi-valuation ring whose semi-fraction ring $A=A^+_\pfrak$ is stably coherent and regular. Then the canonical maps
\begin{enumerate}
\item $\K_{\geq 0}(A^+) \To \K(A^+)$, 
\item $\K(A^+) \To \K(A^+[t_1,\ldots,t_k])$, and
\item $\K(A^+) \To \KH(A^+)$
\end{enumerate}
are equivalences. Here, $\K(A^+)$ denotes the non-connective algebraic K-theory spectrum of $A^+$ and $\KH(A^+)$ its homotopy invariant K-theory spectrum.
\end{thm*}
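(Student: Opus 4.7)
The plan is to realise $A^+$ as a Tor-independent pullback of rings with well-understood K-theory, and then to apply a flat excision theorem to transport the three desired equivalences.

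First, using the structure of a semi-valuation ring---namely, that $\pfrak$ is simultaneously a prime of $A^+$ and the maximal ideal of the local ring $A = A^+_\pfrak$, and that $A^+/\pfrak$ sits as a valuation subring of the residue field $A/\pfrak$---I would obtain the pullback presentation
$$A^+ \;\cong\; A \times_{A/\pfrak} (A^+/\pfrak).$$
The localisation $A^+ \to A$ is flat, so this square is Tor-independent, i.e.\ a derived pullback.

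Next, I would apply flat excision / Tor-independent base change for K-theory and its variants (Bass--Milnor patching in the connective range; Tamme's theorem, or Land--Tamme, for non-connective $K$ and for $KH$) to produce compatible fiber sequences
$$K(A^+) \To K(A) \oplus K(A^+/\pfrak) \To K(A/\pfrak),$$
and the analogues for $K_{\geq 0}$ and $KH$. All of these will be natural in polynomial extensions $(-) \mapsto (-)[t_1,\ldots,t_k]$ and compatible with the comparison transformations $K_{\geq 0} \Rightarrow K \Rightarrow KH$.

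Each of the three non-$A^+$ corners is then stably coherent and regular: $A$ by hypothesis, $A^+/\pfrak$ as a valuation ring (built into the semi-valuation structure), and $A/\pfrak$ as a field. By the classical theorem recalled as item~(1) of the introduction, each such $B$ satisfies $K_{\geq 0}(B) \simeq K(B) \simeq K(B[t_1,\ldots,t_k])$ and $K(B) \simeq KH(B)$. Transporting these equivalences through the fiber sequences above then gives all three equivalences for $A^+$ simultaneously. The main obstacle is the second step: securing a uniform Mayer--Vietoris / flat base-change statement for non-connective $K$ and for $KH$ without any coherence hypothesis on $A^+$ itself. Once the pullback square has been turned into fiber sequences, the remaining argument is essentially formal.
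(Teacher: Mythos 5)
Your approach is correct and takes a genuinely different route from the paper's. To deduce the corollary, both you and the paper rely on the key intermediate fact that the Milnor square
\[
\begin{xy}\xymatrix{
A^+[t_1,\ldots,t_k] \ar[r]\ar[d] & A[t_1,\ldots,t_k] \ar[d]\\ V[t_1,\ldots,t_k] \ar[r]& k[t_1,\ldots,t_k]
}\end{xy}
\]
becomes cartesian after applying $\K$ (respectively $\KH$), and then combine this with the known statements for the corners $A$ (stably coherent regular), $V$ (valuation ring), and $k$ (field). The difference is how one establishes that cartesian square. The paper (Theorem~\ref{k-milnor-square--thm}) first writes $V$ as a filtered colimit of finite-rank valuation rings via the dimension inequality, passes the whole Milnor square through this colimit, and then --- in the microbial case --- produces an element $s\in A^+$ making $A^+\to V$ an analytic isomorphism along $\{s^n\}$, so that Weibel's analytic isomorphism theorem applies. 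You instead observe directly that $A^+\to A=A^+_\pfrak$ is a flat localization, so $V\otimes^\mathrm{L}_{A^+}A\simeq V\otimes_{A^+}A\simeq k$, and invoke the Land--Tamme excision theorem for localizing invariants (the obstruction ring $V\odot_{A^+}^k A$ has underlying spectrum $V\otimes^\mathrm{L}_{A^+}A\simeq k$, so the map to $k$ is an equivalence); this gives the cartesian square for $\K$ and $\KH$ in one step for each $k\geq 0$, with no rank-reduction needed. Statement~(i) then does not require a separate connective fiber sequence: the vanishing of $\K_n(A^+)$ for $n<0$ falls out of the long exact sequence attached to the non-connective pullback square together with $\K_{<0}$ of the corners vanishing and $\K_0(A)\oplus\K_0(V)\to\K_0(k)$ being surjective. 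Your route is shorter and more conceptual; the paper's has the mild virtue of avoiding Land--Tamme in favour of the older analytic-isomorphism theorem, at the cost of the valuation-theoretic filtered-colimit reduction. (It is worth noting that the paper itself records a Land--Tamme--style statement as Lemma~\ref{perfect-module-milnor-square--lem}, but uses it only for the discussion of Question~\ref{milnor-square-regularity--lem}, not for Theorem~\ref{k-milnor-square--thm}.)
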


The relationship between regularity of rings and homotopy invariance of their algebraic K-theory has been studied since the origins of K-theory. 
The case of noetherian regular rings has been proved by Quillen \cite{quillen73} and the same proof works more generally for stably coherent regular rings, see Swan \cite{swan-coherent}. Another proof for stably coherent regular rings was given by Waldhausen \cite[Thm. 3 \& Thm. 4]{waldhausen-free-1}. For valuation rings there are also proofs by Kelly-Morrow \cite[Thm.~3.2]{kelly-morrow} and Kerz-Strunk-Tamme \cite[Lem.~4.3]{kst-vorst}. Banerjee-Sadhu showed that Prüfer domains are stably coherent and regular \cite{banerjee-sadhu}.
Recently, Antieau-Mathew-Morrow showed homotopy invariance for perfect $\F_p$-algebras \cite[Prop.~5.1]{antieau-mathew-morrow}.
For $C^*$-algebras, Higson has shown the statement for stable ones \cite[\S 6]{higson-k-theory-stable} and Corti\~{n}as-Thom for rings $A\otimes_\mathbf{C} I$ with an H-unital $C^*$-algebra $A$ and a sub-harmonic ideal $I$ satisfying $I=[I,I]$ \cite[Thm.~8.2.5]{cortinas-thom-comparison} as well as for rings $S\otimes_\mathbf{C} C$ with a smooth $\mathbf{C}$-algebra $S$ and a commutative $C^*$-algebra $C$ \cite[Thm.~1.5]{cortinas-thom-acta}.
Recently, Aoki showed the case of continuous functions on compact Hausdorff spaces with values in local division rings \cite{aoki}.
Of course, this list of references is not exhaustive.

So far, it has been an open problem whether the K-theory of a non-coherent regular ring satisfies homotopy invariance, e.g. consider \cite[p.~8198]{braunling-frobenius}. Showing that semi-valuation rings with non-trivial valuation are regular (Lemma~\ref{semi-valuation-ring-regular--lem}) we add to this line of research the following.

\begin{remark}[Remark~\ref{k-regularity--rem}]
Certain semi-valuation rings give examples of non-coherent regular rings whose algebraic K-theory does not satisfy homotopy invariance.
\end{remark}

By private communication, the author also learned of another such example found by Luca Passolunghi.


\vspace{6pt}
Semi-valuation rings have been introduced by Temkin \cite{temkin-relative-zr} and they occur as the stalks of his relative Riemann-Zariski spaces $\RZ_Y(X)$ which are locally ringed spaces associated with any separated morphism $f\colon Y\to X$ between quasi-compact and quasi-separated (qcqs) schemes. Furthermore, semi-valuation rings are important since they are the stalks of discretely ringed adic spaces, see Remark~\ref{local-Huber-pairs--rem}.
Exploiting that algebraic K-theory commutes with the formation of stalks, we deduce the following statement for the K-theory sheaves $\KZR$ and $\KH^\mathrm{RZ}$ on this space, see Definition~\ref{KZR--def} for a precise definition.

\begin{thm*}[Proposition~\ref{KZR-homotopy-invariant-U-noetherian--prop}, Corollary~\ref{generalisation--cor}]
Let $f\colon Y\to X$ be a separated morphism between qcqs schemes. Assume that $Y$ is of finite dimension, that all its stalks are stably coherent regular rings, and that the morphism $f\colon Y\to X$ admits a compactification (e.g.\ $f$ is of finite type). Then the canonical maps
\begin{enumerate}
\item $\KZR(-) \To \KZR((-)[t_1,\ldots,t_k])$ and
\item $\KZR(-) \To \KH^\mathrm{RZ}(-)$
\end{enumerate}
are equivalences of spectrum-valued sheaves on the topological space $\RZ_Y(X)$.
\end{thm*}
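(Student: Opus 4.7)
The plan is to verify both maps stalkwise on the topological space $\RZ_Y(X)$. Since $\KZR$ and $\KH^{\mathrm{RZ}}$ (as well as their polynomial-ring versions) are defined as spectrum-valued sheaves, a natural transformation between them is an equivalence of sheaves precisely when it is a stalkwise equivalence at every $x \in \RZ_Y(X)$. I would therefore first reduce to a pointwise statement about the local rings $\O_{\RZ_Y(X), x}$.

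Next I would identify these stalks. By Temkin's construction, under the compactification hypothesis, the local ring at any $x \in \RZ_Y(X)$ is a semi-valuation ring $(A^+, \pfrak)$, realised as a filtered colimit of local rings of the modifications comprising $\RZ_Y(X)$. Since algebraic K-theory --- including its non-connective variant, via Bass-style delooping (which is why finite Krull dimension enters), and $\KH$ --- commutes with filtered colimits of rings, the stalks of $\KZR$, $\KZR((-)[t_1,\ldots,t_k])$, and $\KH^{\mathrm{RZ}}$ at $x$ become $\K(A^+)$, $\K(A^+[t_1,\ldots,t_k])$, and $\KH(A^+)$ respectively. The key additional input is that the semi-fraction ring $A = A^+_\pfrak$ is stably coherent and regular; one verifies this by tracing through Temkin's description, in which $A$ appears as a localisation of a stalk of $\O_Y$ at the image point of $x$ under $\RZ_Y(X)\to Y$, and then invoking the assumption that every stalk of $\O_Y$ is stably coherent regular (both properties being stable under localisation).

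With these ingredients in place, Corollary~\ref{K-semi-valuation-ring--cor} applies to $(A^+,\pfrak)$ at every point $x\in\RZ_Y(X)$, yielding equivalences
\[ \K(A^+) \To \K(A^+[t_1,\ldots,t_k]) \quad \text{and} \quad \K(A^+) \To \KH(A^+). \]
These are exactly the stalk maps of the natural transformations in the theorem, so both are stalkwise equivalences and hence equivalences of sheaves.

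The main obstacle is the second step: verifying that the stalk of $\O_{\RZ_Y(X)}$ is a semi-valuation ring whose semi-fraction ring is (a localisation of) a stalk of $\O_Y$, and checking that the K-theoretic functors genuinely commute with the filtered colimits defining these stalks. The finite-dimensionality hypothesis is precisely what keeps non-connective K-theory and $\KH$ well-behaved under these colimits, while the compactification hypothesis supplies the structural description of $\RZ_Y(X)$ underlying Temkin's identification of its local rings.
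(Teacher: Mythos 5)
Your overall strategy matches the paper's: reduce to a statement about stalks, identify the stalks as semi-valuation rings with the right semi-fraction ring, and invoke Corollary~\ref{K-semi-valuation-ring--cor}. However, there is a genuine gap right at the start, and you also misattribute the roles of the two finiteness hypotheses.

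The gap: you assert that a map of spectrum-valued sheaves on $\RZ_Y(X)$ is an equivalence precisely when it is a stalkwise equivalence. This is \emph{not} automatic. Unlike sheaves of sets or abelian groups, sheaves of spectra on a general topos need not be detected on stalks --- this fails exactly when the topos is not hypercomplete (or, in the paper's formulation, when the $\infty$-category of sheaves of spectra is not left-complete). A sheaf of spectra can have all stalks contractible without being zero. The paper therefore spends most of its proof establishing that $\Sh(\RZ_Y(X))$ has finite homotopy dimension: each model $X'$ in a cofinal family $\Mcal_d$ has $\dim \leq d$, so $\Sh(X')$ has homotopy dimension $\leq d$ by Clausen--Mathew, this passes to the cofiltered limit $\Sh(\RZ_Y(X))$, and locally too, giving Postnikov completeness and hence left-completeness of $\Sh_\Sp(\RZ_Y(X))$. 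Only then can one check equivalences on stalks. You need some version of this argument (or at least a citation for hypercompleteness of the relevant topos).

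Consequently, you also misplace the role of finite dimension: it is \emph{not} needed to make non-connective $\K$ or $\KH$ commute with the filtered colimits defining the stalks --- both commute with filtered colimits of rings unconditionally (this is used freely in Lemma~\ref{KZR-stalks--lemma} without any dimension hypothesis). Finite dimension enters precisely to bound the homotopy dimension of the sheaf topos as above. Similarly, the compactification hypothesis is not what makes the stalks semi-valuation rings: Temkin's identification of stalks of $\RZ_Y(X)$ as semi-valuation rings (Lemma~\ref{ZR-stalks-are-semi-valuation-rings--lem}) requires only that $f$ be separated. The compactification is used in Corollary~\ref{generalisation--cor} to ensure $\dim(X') \leq \dim(Y)$ for all $Y$-modifications $X'$, which supplies the cofinal bounded-dimension family $\Mcal_d$ needed for Proposition~\ref{KZR-homotopy-invariant-U-noetherian--prop}.
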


In the special setting that the morphism $Y\to X$ is the immersion of a regular dense open subscheme $Y$ into a divisorial noetherian scheme $X$ and assuming $k=1$, this specialises to the author's previous result for the K-theory of admissible Zariski-Riemann spaces $\skp{X}_Y$ \cite{k-admissible-zr}, see Corollary~\ref{generalisation--cor} and Remark~\ref{comparison--rem}. 

\vspace{6pt}\noindent\textbf{Notation.} 
Discrete categories are denoted by $\mathrm{upright}$ $\mathrm{letters}$ whereas genuine $\infty$-categories are denoted by $\mathbf{bold}$ $\mathbf{letters}$.
For a ring $R$ we denote by $\K(R)$ its non-connective algebraic K-theory spectrum à la Blumberg-Gepner-Tabuada \cite[\S 9.1]{bgt13} which is an object of the $\infty$-category $\Sp$ of spectra \cite[1.4.3.1]{ha}; its associated object in the homotopy category of spectra is equivalent to the K-theory spectrum of Thomason-Trobaugh \cite[Def.~3.1]{tt90}. In particular, the connective part $\K_{\geq 0}(R)$ is equivalent to Quillen's K-theory \cite{quillen73}, combine \cite[Prop.~3.10]{tt90} with \cite[\S 7.2]{bgt13}.

\vspace{6pt}\noindent\textbf{Acknowledgements.}
This work was supported by the Deutsche Forschungsgemeinschaft (DFG) through the Collaborative Research Centre TRR 326 \textit{Geometry and Arithmetic of Uniformized Structures}, project number 444845124. I thank Katharina Hübner for helpful discussions around the coherence of semi-valuation rings as well as Oliver Bräunling, Matthew Morrow, and Georg Tamme for conversations around this paper's content.

\section{Semi-valuation rings}
\label{section--semi-valuation-rings}

For the convenience of the reader we recollect the definition and some basic facts about semi-valuation rings. All this is due to Temkin \cite{temkin-relative-zr}. Let us repeat some relevant terminology:
A \emph{valuation} on a ring $R$ is a map $|-|\colon R\to\Gamma\cup\{0\}$ for a totally ordered multiplicative abelian group $\Gamma$ such that $|1|=1$, $|xy|=|x|\cdot|y|$, and $|x+y| \leq \max\{|x|,|y|\}$ for all $x,y\in R$ with the convention that $0\cdot\gamma=0$ and $0<\gamma$ for all $\gamma\in\Gamma$. It follows that $|0|=0$ and that the support $\mathrm{supp}(|-|) := |-|\inv(0)$ is a prime ideal. Two valuations $|-|_1$ and $|-|_2$ on $R$ are called \emph{equivalent} if for all $x,y\in R$ the conditions $|x|_1 \leq |y|_1$ and $|x|_2 \leq |y|_2$ are equivalent. 

\begin{defin}[Temkin {\cite[\S 2.1]{temkin-relative-zr}.}] \label{semi-valuation-ring--def}
A \df{semi-valuation ring} is a pair $(A^+,|-|)$ consisting of a ring $A^+$ and a valuation $|-|\colon A^+ \to \Gamma\cup\{0\}$ such that
\begin{enumerate}
	\item every zero divisor of $A^+$ lies in the kernel of $|-|$ and
	\item for all $x,y\in A^+$ with $|x| \leq |y| \neq 0$ one has $y|x$.
\end{enumerate}
If $A^+$ is a semi-valuation ring and $\pfrak := \mathrm{supp}(|-|)$, then the local ring $A^+_\pfrak$ is called its \df{semi-fraction ring} and the ring $A^+/\pfrak$ is a valuation ring. We call $\pfrak$ the \df{valuative ideal} of $A^+$.
\end{defin}

We have the following characterisation of semi-valuation rings.

\begin{lemma} \label{semi-valuation-rings-characterisation--lem}
For a semi-valuation ring $A^+$ with valuative ideal $\pfrak$, semi-fraction ring $A:=A^+_\pfrak$, valuation ring $V:=A^+/\pfrak$, and residue field $k:=A^+_\pfrak/\pfrak$ the induced square
\begin{align*} \tag{$\square$} \begin{xy}\xymatrix{
A^+ \ar[r]\ar[d]_\pi & A \ar[d]\\ V \ar[r]& k
}\end{xy} \end{align*}
is a Milnor square (i.e.\ a bicartesian square of rings where two parallel arrows are surjective). In particular, $A^+$ is a local ring with maximal ideal $\mfrak^+ := \pi\inv(\mfrak_V)$.

Consequently, the following data are equivalent:
\begin{enumerate}
\item A semi-valuation ring $(A^+,|-|)$ up to equivalence of the valuation $|-|$.
\item A local ring $(A^+,\mfrak^+)$ and a prime ideal $\pfrak\subset A^+$ such that $A^+/\pfrak$ is a valuation ring.
\item A local ring $(A,\pfrak)$ and a valuation ring $V\subseteq A/\pfrak$.
\end{enumerate}
\end{lemma}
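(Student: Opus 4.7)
My plan is to split the proof into two stages: first, establish the Milnor square and deduce the local structure of $A^+$; then, use these to check the three-way equivalence of data. The constructions passing between (i), (ii), and (iii) are clear; the content is in verifying that they land where claimed.

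For the Milnor square I start by noting that both vertical maps are quotient maps (hence surjective): $A^+ \twoheadrightarrow V$ kills $\pfrak$, and $A \twoheadrightarrow k$ kills $\pfrak A$. The crux is then showing that the natural map $\pfrak \to \pfrak A$ from $\pfrak$ (as an ideal of $A^+$) to its extension in $A = A^+_\pfrak$ is an isomorphism. Injectivity uses property (i) of Definition~\ref{semi-valuation-ring--def}: if $x \in \pfrak$ has $sx = 0$ for some $s \notin \pfrak$, then $s$ is a non-zero-divisor, so $x = 0$. Surjectivity uses property (ii): for $x \in \pfrak$ and $y \notin \pfrak$ one has $|x| = 0 \leq |y| \neq 0$, so $y \mid x$ in $A^+$; writing $x = yz$, the relation $|y|\,|z| = 0$ together with $|y| \neq 0$ forces $|z|=0$, so $z \in \pfrak$ and $x/y = z$ already lies in $\pfrak$. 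Once $\pfrak \cong \pfrak A$ is in hand, a short diagram chase on the two exact sequences $0 \to \pfrak \to A^+ \to V \to 0$ and $0 \to \pfrak A \to A \to k \to 0$ gives bicartesianness.

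Given the Milnor square, an element of $A^+$ is a pair $(a,v) \in A \times_k V$, and the standard pullback computation shows $(a,v) \in (A^+)^\times$ iff $a \in A^\times$ and $v \in V^\times$. Now $V \hookrightarrow k = \Frac(V)$ is injective, so $v \in V^\times$ implies $v \neq 0$ in $k$, hence $\bar a \neq 0$, hence $a \in A^\times$ (as $A$ is local); the condition therefore collapses to $v \notin \mfrak_V$. The non-units thus form the ideal $\pi\inv(\mfrak_V) = \mfrak^+$, and $A^+$ is local.

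For the equivalence of data, (i)$\Rightarrow$(ii) is immediate from the first part. For (ii)$\Rightarrow$(iii) I set $A := A^+_\pfrak$, local with maximal ideal $\pfrak A$ and residue field $k = (A^+/\pfrak)_{(0)} = \Frac(V)$, inside which $V$ sits as a valuation ring. For (iii)$\Rightarrow$(i) I form $A^+ := A \times_k V$ and equip it with the valuation $|(a,v)| := |v|_V$ coming from $V$; the main work is to verify axioms (i) and (ii) of Definition~\ref{semi-valuation-ring--def} for this $A^+$. Axiom (i): if $(a,v)$ has $v \neq 0$, then $\bar a = v \neq 0$ in $k$, so $a \in A^\times$; combined with $V$ being a domain this makes $(a,v)$ a non-zero-divisor in the pullback. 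Axiom (ii): given $x = (a,v)$ and $y = (b,w)$ with $|v| \leq |w| \neq 0$, the valuation ring $V$ yields $v = wu$ with $u \in V$; since $w \neq 0$ forces $b \in A^\times$, one puts $c := b\inv a \in A$, checks that its image in $k$ equals $u$ (using $\bar b = w$ and $\bar a = v$), so that $(c,u) \in A^+$ furnishes the desired factor $y\cdot(c,u) = x$. That the three constructions are mutually inverse is then bookkeeping. The main obstacle throughout is the initial identification $\pfrak \cong \pfrak A$, because this is where both semi-valuation axioms are used essentially and every subsequent statement in the lemma flows from the resulting Milnor square.
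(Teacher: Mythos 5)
Your proof is correct and essentially follows the paper's approach: the heart of both arguments is verifying conditions~(i) and~(ii) of Definition~\ref{semi-valuation-ring--def}, and your verification parallels the paper's. The differences lie in the level of detail and in the direction of one implication among the three data. The paper declares (i)$\Rightarrow$(ii)$\Leftrightarrow$(iii) to be clear and only writes out (ii)$\Rightarrow$(i); in that verification it invokes, without separate justification, that $p/y\in A^+$ for $p\in\pfrak$ and $y\notin\pfrak$ — which is precisely the surjectivity of $\pfrak\to\pfrak A$ you prove via Definition~\ref{semi-valuation-ring--def}(ii). So your argument surfaces what the paper leaves implicit. You also prove the Milnor square and the locality of $A^+$ in full (the paper treats these as evident), and you close the cycle via (iii)$\Rightarrow$(i), constructing $A^+:=A\times_k V$ and checking the two axioms on pairs $(a,v)$, rather than via the paper's (ii)$\Rightarrow$(i) in terms of representatives in $A^+$. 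Both routes are valid; yours has the small advantage that the pullback description is in hand exactly when the axiom check requires it, rather than being assumed tacitly.
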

\begin{proof}
The implications (i) $\Rightarrow$ (ii) $\Leftrightarrow$ (iii) are clear. Assuming (ii), we obtain a valuation $|-|\colon A^+ \to[\pi] V \to V^\times/k^\times\cup\{0\}$ since $V$ is assumed to be a valuation ring. We will check the conditions (i) and (ii) of Definition~\ref{semi-valuation-ring--def}. As $V$ is a domain, we get (i). Now let $x,y\in A^+$ with $|x|\leq |y|\neq 0$, hence $y\notin\pfrak$. Since the morphism $A\to k$ reflects units, we get that $y\in A^\times$ and that for every $p\in\pfrak$ the ratio $\frac{p}{y}\in A$ lies in $A^+$. 
By standard properties of valuation rings we find an element $z\in A^+$ such that $\pi(x)=\pi(y)\cdot\pi(z)$ in $V$, hence $x=yz+p$ for a suitable element $p\in\pfrak$. It follows that $x=yz+p=y(z+\frac{p}{y})$ in $A^+$ as desired.
\end{proof}

\begin{notation}
In the sequel, we refer to a semi-valuation ring as a pair $(A^+,\pfrak)$ as in Lemma~\ref{semi-valuation-rings-characterisation--lem}, even though the valuation is only defined up to equivalence. We always assume the square $(\square)$ to be implicitly defined.
\end{notation}

\begin{lemma} \label{semi-valuation-ring-maximal-ideal--lem}
Let $(A^+,\pfrak)$ be a semi-valuation ring with a non-trivial valuation (equivalently, its valuation ring $V=A^+/\pfrak$ is not a field). Then $\colim_{x\in \mfrak^+\setminus\pfrak}\,(x)$ is a \emph{filtered} colimit and its canonical morphism to $\mfrak^+$ is an isomorphism of ideals of $A^+$.
\end{lemma}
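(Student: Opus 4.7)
The plan is to verify the two parts of the claim separately: filteredness of the indexing diagram, and bijectivity of the canonical map to $\mfrak^+$. Both ultimately rest on property~(ii) of Definition~\ref{semi-valuation-ring--def}, which translates the total ordering of the value group $\Gamma$ into a divisibility statement in $A^+$.

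For filteredness, I index the diagram by the poset $\mfrak^+\setminus\pfrak$ with $x\leq y$ iff $(x)\subseteq(y)$, equivalently $y\mid x$. Given two elements $x,y\in\mfrak^+\setminus\pfrak$, their valuations are comparable in the totally ordered set $\Gamma\cup\{0\}$; say $\val{x}\leq\val{y}$. Since $y\notin\pfrak$ we have $\val{y}\neq 0$, so property~(ii) of Definition~\ref{semi-valuation-ring--def} gives $y\mid x$, whence $(x)\subseteq(y)$. Thus $y$ is an upper bound for $\{x,y\}$, and the diagram is (directed, hence) filtered.

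For the isomorphism, I use that a filtered colimit of submodules is their union, so $\colim_x(x)=\bigcup_x(x)$, and this union is contained in $\mfrak^+$ because each generator $x$ lies in the ideal $\mfrak^+$. Conversely, let $m\in\mfrak^+$. If $m\notin\pfrak$, then $m\in(m)$ with $m\in\mfrak^+\setminus\pfrak$, and we are done. If $m\in\pfrak$, then $\val{m}=0$; invoking the non-triviality hypothesis (i.e.\ $\mfrak_V\neq 0$), pick any $x$ in the non-empty set $\mfrak^+\setminus\pfrak=\pi\inv(\mfrak_V\setminus\{0\})$. Then $\val{m}=0\leq\val{x}\neq 0$, so property~(ii) yields $x\mid m$ and hence $m\in(x)$.

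The only delicate point is the case $m\in\pfrak$ in the surjectivity argument: without non-triviality, $\mfrak^+\setminus\pfrak$ can be empty while $\mfrak^+=\pfrak$ remains non-zero, so the colimit would be trivial but $\mfrak^+$ would not. Once non-triviality produces a single element $x\in\mfrak^+\setminus\pfrak$, axiom~(ii) supplies divisors for all remaining elements of $\pfrak$ at once, and no further work is required.
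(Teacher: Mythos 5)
Your proof is correct and follows the same route as the paper's: filteredness comes from the total ordering of $\mfrak^+\setminus\pfrak$ under divisibility via axiom~(ii), and non-triviality supplies an element $x\in\mfrak^+\setminus\pfrak$ whose principal ideal absorbs all of $\pfrak$. You are in fact more explicit than the paper, which compresses the surjectivity step into a one-line appeal to the fact that every module is the filtered colimit of its finitely generated submodules; your direct element-by-element verification makes visible the cofinality argument that the paper leaves implicit.
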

\begin{proof}
Since the valuation is assumed to be non-trivial, the set $\mfrak^+\setminus\pfrak$ is non-empty. According to condition (ii) in Definition~\ref{semi-valuation-ring--def}, the elements of $I$ are totally ordered with respect to divisibility, hence the colimit is filtered. This implies the claim since every module is the filtered colimit of its finitely generated submodules.
\end{proof}

\begin{remark} \label{local-Huber-pairs--rem}
Semi-valuation rings are precisely the rings $A^+$ occuring in \emph{local Huber pairs} $(A,A^+)$ as defined by Hübner-Schmidt \cite[p.~407f.]{huebner-schmidt} which are the local rings for discretely ringed adic spaces \cite[10.9~(i)]{huebner-schmidt}.
\end{remark}

\section{Coherence and regularity of semi-valuation rings}

In this section, we recall the notions of coherence and regularity for rings. For an elaborate treatment we refer to the book of Glaz \cite{glaz}. Afterwards, we will examine these notions for semi-valuation rings.

\begin{defin} \label{coherent--def}
Let $A$ be a ring. An $A$-module $M$ is said to be\ldots 
\begin{enumerate}
\item \df{coherent} if every finitely generated submodule is finitely presented. The ring $A$ is said to be coherent if it is a coherent $A$-module.
\item \df{finitely $n$-presented} for $n\in\N$ if there exists an exact sequence
\[
F_n \To \ldots \To F_2 \To F_1 \To F_0 \To M \To 0
\]
with finitely generated free $A$-modules $F_0,\ldots,F_n$.
\item \df{pseudo-coherent} if there exists an exact sequence
\[
\ldots \To P_2 \To P_1 \To P_0 \To M \To 0
\]
with finitely generated projective $A$-modules $(P_i)_{i\in\N}$.
\end{enumerate} 
\end{defin}

\noindent It follows immediately that a ring is noetherian if and only if every finitely generated module is coherent and that a ring is coherent if and only if every finitely presented module is coherent.

For regularity, we have several notions appearing in the literature which are slightly different, see Remark~\ref{regularity--remark} below.

\begin{defin} \label{regular--def}
Let $A$ be a ring and let $n\in\N$. We say that $A$ is \ldots
\begin{enumerate}
\item \df{regular} if every pseudo-coherent module has finite projective dimension.
\item $n$-\df{regular} if every finitely $n$-presented $A$-module has finite projective dimension.
\item \df{uniformly regular} if the projective dimensions of all pseudo-coherent modules are uniformly bounded.
\item \df{uniformly $n$-regular} if the projective dimensions of all finitely $n$-presented modules are uniformly bounded.
\item \df{Glaz-regular} if every finitely generated ideal of $A$ has finite projective dimension.
\end{enumerate} 
\end{defin}

\begin{prop} \label{regularity--prop}
For any ring, we have the following implications:
\begin{enumerate}
\item For all $n\geq 0\colon$ (uniformly) $n$-regular $\Rightarrow$ (uniformly) $(n+1)$-regular.
\item For all $n\geq 0\colon$ uniformly $n$-regular $\Rightarrow$ $n$-regular.
\end{enumerate}
For local rings and any $n\geq 0$, we have the implication:
\begin{enumerate}\setcounter{enumi}{2}
\item $n$-regular $\Rightarrow$ regular
\end{enumerate}
For a coherent ring $A$, the following are equivalent:
\begin{enumerate}\setcounter{enumi}{3}
\item $A$ is regular.
\item $A$ is $n$-regular for some $n\geq 1$.
\item $A$ is $n$-regular for all $n\geq 1$.
\item $A$ is Glaz-regular.
\end{enumerate}
\end{prop}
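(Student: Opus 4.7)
The plan is to tackle each implication separately, using three main inputs: truncation of partial free resolutions for (i)--(ii), the fact that finitely generated projective modules over a local ring are free for (iii), and the crucial property of coherent rings that any finitely generated submodule of a finitely generated free module is finitely presented for (iv)--(vii).

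First I would dispatch (i) by truncating: a finitely $(n+1)$-presented module is in particular finitely $n$-presented, so the class of modules to which the $n$-regularity hypothesis applies grows with $n$, and both ``finite projective dimension'' and ``uniformly bounded projective dimension'' transfer to the larger class. Part (ii) is immediate, since a uniform bound forces each individual dimension to be finite. For (iii), I would use that over a local ring every finitely generated projective module is free (a residue-field basis lifts). Therefore the defining resolution of a pseudo-coherent module may be chosen by finitely generated free modules, so a pseudo-coherent module is finitely $n$-presented for every $n\geq 0$; any one of the $n$-regularity hypotheses then forces its projective dimension to be finite, giving regularity.

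For the coherent equivalences, the key observation is that in a coherent ring the kernel of any map $F_0\to M$ from a finitely generated free module onto a finitely presented module is itself finitely generated, and then by coherence finitely presented, so that any partial free resolution of a finitely presented module can be extended indefinitely. This collapses the notions ``finitely presented'', ``finitely $n$-presented for some $n\geq 1$'', ``finitely $n$-presented for all $n\geq 1$'', and ``pseudo-coherent'' into a single class, immediately yielding (iv) $\Leftrightarrow$ (v) $\Leftrightarrow$ (vi). The implication (v) $\Rightarrow$ (vii) is then trivial, since finitely generated ideals in a coherent ring are automatically finitely presented.

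I expect the main obstacle to be the remaining direction (vii) $\Rightarrow$ (v), which I would handle by induction on the number of generators of a finitely presented module $M$. Given generators $m_1,\ldots,m_k$, one writes the cyclic submodule $m_k A \cong A/\mathrm{Ann}(m_k)$ and observes that $\mathrm{Ann}(m_k)$ is a finitely generated ideal (by coherence applied to the submodule $m_k A\subseteq M$), hence has finite projective dimension by Glaz-regularity, and so does $A/\mathrm{Ann}(m_k)$. The short exact sequence
\[
0 \To m_k A \To M \To M/m_k A \To 0
\]
together with the inductive hypothesis applied to $M/m_k A$ (which is again finitely presented by coherence) then concludes. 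The delicate point is to keep all intermediate modules within the class of finitely presented modules, which is precisely what coherence ensures; this is the standard Glaz-type bookkeeping that I would need to execute carefully but which presents no genuine difficulty.
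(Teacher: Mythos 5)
Your proof is correct and takes essentially the same approach as the paper's: parts (i)--(ii) unwind the definitions, (iii) uses that finitely generated projectives over a local ring are free, and the coherent equivalences collapse ``finitely presented'' / ``pseudo-coherent'' / ``finitely $n$-presented'' into one class and then run the Glaz induction on generators (which is what the paper cites as \cite[Thm.~6.2.1]{glaz}). One wording slip in your treatment of (i): the class of finitely $n$-presented modules \emph{shrinks} (not grows) as $n$ increases, so the $n$-regularity hypothesis becomes \emph{weaker}; your opening observation that a finitely $(n+1)$-presented module is in particular finitely $n$-presented is exactly what makes the implication go, but the phrasing ``grows with $n$'' points the wrong way.
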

\begin{proof} 
The implications of (i) and (ii) hold by design. The implication in (iii) follows since projective modules over local rings are free. 
The implication (vi)$\Rightarrow$(v) is trivial. If $A$ is coherent, every finitely presented $A$-module is pseudo-coherent and vice versa, hence (iv)$\Leftrightarrow$(v).
The equivalence with (vii) goes by induction on the number of generators \cite[Thm.~6.2.1]{glaz}.
\end{proof}

\begin{remark} \label{regularity--remark}
The notion ``1-regular'' is called ``regular'' by Gersten \cite[Def.~1.3]{gersten-free-rings}. 
The notion ``Glaz-regular'' is called ``regular'' by Glaz \cite[ch.~6, \S 2]{glaz}.
For a coherent ring, these notions agree with Waldhausen's notion ``regular coherent'' \cite[p.~138]{waldhausen-free-1}.
Antieau-Mathew-Morrow \cite[\S 2]{antieau-mathew-morrow} call a ring ``weakly regular'' if it has finite flat dimension (for them a regular ring is noetherian). For coherent rings, the notion ``weakly regular'' is equivalent to the notion ``uniformly 1-regular'' and it is \emph{stronger} than the notions used by Gersten and Glaz.
For arbitrary rings, the notion ``regular'' from Definition~\ref{regular--def} seems to be most meaningful (at least to the author).
\end{remark}

\subsection*{Regularity of semi-valuation rings}

\begin{lemma} \label{projective-dimension--lem}
Let $(R,\mfrak)$ be a local ring and $M$ an $R$-module.
\begin{enumerate}
\item If $M$ is finitely presented and $\Tor_1^R(M,R/\mfrak)=0$, then $M$ is free. 
\item If $M$ is finitely $(n+1)$-presented and $\Tor_{n+1}^R(M,R/\mfrak)=0$ for some $n\geq 0$, then $M$ has projective dimension $\leq n$.
\end{enumerate} 
\end{lemma}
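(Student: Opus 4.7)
The plan is to prove (i) directly using Nakayama's lemma and then deduce (ii) from (i) by a standard dimension-shifting argument, so part (i) is really the $n=0$ case of part (ii).

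For part (i), I would choose a minimal presentation as follows. Since $M$ is finitely presented and $R$ is local with residue field $k=R/\mfrak$, the $k$-vector space $M/\mfrak M$ is finite-dimensional; pick a basis and lift it to elements of $M$, obtaining a surjection $\phi\colon F_0 \twoheadrightarrow M$ from a finitely generated free $R$-module such that $F_0/\mfrak F_0 \to M/\mfrak M$ is an isomorphism. Let $K := \ker\phi$; since $M$ is finitely presented and $F_0$ is finitely generated free, $K$ is finitely generated. The short exact sequence $0\to K\to F_0 \to M\to 0$ tensored with $k$ yields
\[
0 = \Tor_1^R(M,k) \To K/\mfrak K \To F_0/\mfrak F_0 \iso M/\mfrak M \To 0,
\]
so $K/\mfrak K=0$, and Nakayama's lemma (applicable since $K$ is finitely generated) forces $K=0$. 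Hence $\phi$ is an isomorphism and $M\cong F_0$ is free.

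For part (ii), fix a presentation $F_{n+1}\to F_n\to\ldots\to F_0\to M \to 0$ with all $F_i$ finitely generated free, and define the syzygies $K_0 := M$ and $K_i := \ker(F_{i-1}\to F_{i-2})$ for $1\leq i\leq n$ (with $F_{-1}:=M$). Each short exact sequence $0\to K_i \to F_{i-1} \to K_{i-1}\to 0$ together with the flatness of $F_{i-1}$ gives $\Tor_j^R(K_{i-1},k) \cong \Tor_{j-1}^R(K_i,k)$ for $j\geq 2$. Iterating $n$ times shows
\[
\Tor_1^R(K_{n-1},k) \cong \Tor_{n+1}^R(M,k) = 0.
\]
Moreover, the tail $F_{n+1}\to F_n\to K_{n-1}\to 0$ exhibits $K_{n-1}$ as finitely presented (i.e.\ finitely $1$-presented). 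Part (i) applied to $K_{n-1}$ shows that $K_{n-1}$ is free, so
\[
0 \To K_{n-1} \To F_{n-1} \To \ldots \To F_0 \To M \To 0
\]
is a projective resolution of length $n$, proving $\operatorname{pd}_R M \leq n$.

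There is no real obstacle here; the proof is essentially bookkeeping. The only subtle point is making sure $K_{n-1}$ is genuinely finitely presented so that part (i) applies to it, which is why the hypothesis is finitely $(n+1)$-presented rather than $n$-presented: one needs two more steps in the free resolution beyond $K_{n-1}$ to control both the generation and the relations of that syzygy.
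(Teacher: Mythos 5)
Your argument follows the paper's proof exactly: a minimal presentation plus Nakayama's lemma for (i), then dimension-shifting along the free resolution to reduce (ii) to (i) applied to a syzygy. The one flaw is an internal off-by-one in the indexing of the syzygies. With your declared convention $K_0 := M$ and $K_i := \ker(F_{i-1}\to F_{i-2})$, iterating the shift $n$ times from $\Tor_{n+1}^R(K_0,R/\mfrak)$ lands on $\Tor_1^R(K_n,R/\mfrak)$, not $K_{n-1}$; accordingly, the exact tail witnessing finite presentation should be $F_{n+1}\to F_n\to K_n\to 0$, and the truncated resolution should read $0\to K_n\to F_{n-1}\to\cdots\to F_0\to M\to 0$, which indeed has length $n$. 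The two displays you actually wrote down are the correct ones \emph{for the other convention} $K_i := \ker(F_i\to F_{i-1})$ (so $K_0 = \ker(F_0\to M)$), which is what the paper uses and under which $K_{n-1}$ is the right syzygy. Pick one indexing and use it consistently; with that repaired, the proof is complete and coincides with the paper's.
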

\begin{proof}
We follow the standard proof \cite[ch.~IV, Thm.~8]{serre-local-algebra} where the noetherian hypothesis is stated, but not needed.

\noindent (i) Let $x_1,\ldots,x_n\in M$ such that their images form a $R/\mfrak$-basis of $M/\mfrak M$ so that the morphism $\phi \colon R^n\to M, e_i\mapsto x_i,$ induces an isomorphism $\bar{\phi} \colon (R/\mfrak)^n \to M/\mfrak M$. By Nakayama's lemma we get an exact sequence $0\to N\to R^n\to[\phi] M \to 0$, inducing an exact sequence
\[
0 = \Tor_1^R(M,R/\mfrak) \To N/\mfrak N \To (R/\mfrak)^n \To[\cong] M/\mfrak M \To 0
\]
so that $N/\mfrak N=0$. If $M$ is finitely presented, then $N$ is finitely generated, hence $N=0$ by Nakayama's lemma.

\noindent (ii)  Let
\[
F_{n+1} \To[f_{n+1}] F_n \To[f_n] \ldots \To F_1 \To[f_1] F_0 \To[f_0] M \To 0
\]
be an exact sequence where $F_{n+1},\ldots,F_0$ are finitely generated free. Setting $K_i := \ker(f_i)$ we get that
\[
\Tor^R_1(K_{n-1},R/\mfrak) = \Tor^R_2(K_{n-2},R/\mfrak) = \ldots = \Tor^R_n(K_0,R/\mfrak) = \Tor^R_{n+1}(M,R/\mfrak) =0.
\]
by assumption. Since $M$ is finitely $(n+1)$-presented, $K_{n-1}$ is finitely presented, hence free by (i), so that $M$ has projective dimension $\leq n$. 
\end{proof}

\begin{example} \label{example-svr-regular}
Let $A:=\Q_p[[X,Y]]$ and let $A^+$ be defined to be the pullback in the Milnor square
\carremap{A^+}{}{\pi}{A}{\mathrm{ev}_{0,0}}{\Z_p}{}{\Q_p.}
Then $A^+$ has maximal ideal $\mfrak^+ = \pi\inv(p\Z_p) = (p,X,Y) = (p)$ as  $X=p\cdot\frac{X}{p}, Y=p\cdot\frac{Y}{p}\in (p)$. Thus $\Tor_2^{A^+}(M,A^+/\mfrak^+)=0$ for any $A^+$-module $M$, so that every finitely 2-presented $A^+$-module has projective dimension $\leq 1$ by Lemma~\ref{projective-dimension--lem}. Thus $A^+$ is 2-regular.
\end{example}

We can generalise this example to any semi-valuation ring with non-trivial valuation.

\begin{lemma} \label{semi-valuation-ring-regular--lem}
Let $(A^+,\pfrak)$ be a semi-valuation with non-trivial valuation. Then every finitely 2-presented $A^+$-module has projective dimension $\leq 1$. In particular, the ring $A^+$ is 2-regular.
\end{lemma}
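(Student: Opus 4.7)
The plan is to reduce the statement to a $\Tor$-vanishing computation via Lemma~\ref{projective-dimension--lem}(ii) (with $n=1$), which says that any finitely $2$-presented module $M$ has projective dimension $\leq 1$ provided $\Tor_2^{A^+}(M,A^+/\mfrak^+)=0$. So the whole task becomes establishing $\Tor_2^{A^+}(-,A^+/\mfrak^+)=0$ identically.

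For this I would prove that the maximal ideal $\mfrak^+$ is flat as an $A^+$-module, since from the short exact sequence
\[
0 \To \mfrak^+ \To A^+ \To A^+/\mfrak^+ \To 0
\]
the long exact $\Tor$-sequence gives $\Tor_2^{A^+}(M,A^+/\mfrak^+)\cong\Tor_1^{A^+}(M,\mfrak^+)$ (because $A^+$ itself is free), and this vanishes once $\mfrak^+$ is flat. Thus the real work is showing $\mfrak^+$ is flat.

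To see this, I would invoke Lemma~\ref{semi-valuation-ring-maximal-ideal--lem}, which is exactly where the non-triviality of the valuation enters: it identifies $\mfrak^+$ with the filtered colimit $\colim_{x\in\mfrak^+\setminus\pfrak}(x)$. Since filtered colimits of flat modules are flat, it suffices to check that each principal ideal $(x)$ with $x\in\mfrak^+\setminus\pfrak$ is in fact free of rank one. The multiplication map $A^+\to (x),\,a\mapsto ax$, is surjective by definition, and its kernel is the annihilator of $x$. By condition (i) of Definition~\ref{semi-valuation-ring--def}, every zero divisor of $A^+$ lies in $\pfrak=\ker|-|$; since $x\notin\pfrak$, it is a non-zero-divisor, so the kernel is trivial and $(x)\cong A^+$.

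The only conceptual step is recognising that the two defining properties of a semi-valuation ring combine to give the flatness of $\mfrak^+$: condition (ii) produces the filtered presentation of $\mfrak^+$ by principal ideals (through Lemma~\ref{semi-valuation-ring-maximal-ideal--lem}), and condition (i) guarantees that each of these principal ideals is free. Everything else is then a mechanical $\Tor$-chase plus the application of Lemma~\ref{projective-dimension--lem}(ii); no further finiteness or coherence hypothesis on $A^+$ is needed. The $2$-regularity of $A^+$ follows immediately from the conclusion about finitely $2$-presented modules by definition.
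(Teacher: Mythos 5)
Your proof is correct and takes essentially the same approach as the paper: both use Lemma~\ref{semi-valuation-ring-maximal-ideal--lem} to express $\mfrak^+$ as a filtered colimit of principal ideals generated by non-zero-divisors (via condition (i) of Definition~\ref{semi-valuation-ring--def}), both conclude $\Tor_2^{A^+}(M,A^+/\mfrak^+)=0$, and both finish with Lemma~\ref{projective-dimension--lem}(ii). The only difference is presentational: you package the colimit argument as ``$\mfrak^+$ is flat by Lazard'' and then dimension-shift, whereas the paper commutes the filtered colimit directly through $\Tor_2$, using that each $A^+/(x)$ has a length-one free resolution.
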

\begin{proof}
By Lemma~\ref{semi-valuation-ring-maximal-ideal--lem}, we can write $\mfrak=\colim_{x\in I}(x)$ as a filtered colimit with $I=\mfrak^+\setminus\pfrak\neq\emptyset$. According to condition (i) in Definition~\ref{semi-valuation-ring--def}, the sequence $0\to A^+\to[\cdot x] A^+\to A^+/(x)\to 0$ is exact for every $x\in I$. Since tensor products are cocontinuous in each variable and since filtered colimits are exact, we get that
\[ \Tor^{A^+}_2(M,A^+/\mfrak^+) = \colim_{x\in I}\,\Tor^{A^+}_2(M,A/(x)) = 0 \]
for any $A^+$-module $M$. By Lemma~\ref{projective-dimension--lem}, every finitely 2-presented $A^+$-module has projective dimension $\leq 1$ as desired.
\end{proof}

\subsection*{Regularity in Milnor squares}

More generally than in the situation of semi-valuation rings, one can wonder under which circumstances pullback rings in Milnor squares are regular. Let
\begin{align*} \tag{M} \begin{xy}\xymatrix{
A \ar[r]^f \ar[d]_p & B \ar[d]^q\\ A' \ar[r]^{f'}& B'
}\end{xy} \end{align*}
be a Milnor square (i.e. a cartesian square of rings such that $p$ and $q$ are surjective). It follows that the square (M) is also a pushout square, i.e.\ $B'\cong B\otimes_AA'$. 

\begin{example}
Let $k$ be field of characteristic $\neq 2$ and consider the node $A=k[X,Y]/(Y^2-X^3-X^2)$. It fits into a Milnor square (M) with $B=k[T]$, $f$ being the normalisation (i.e.\ $X\mapsto T^2-1$ and $Y\mapsto T(T^2-1)$), and $A'=A/(X,Y)$ being the origin. Then $A'$, $B$, and $B'$ are regular noetherian rings, but $A$ is not regular.
\end{example}

\begin{question} \label{milnor-square-regularity--lem}
Given a Milnor square (M) such that the morphism $A\to[f] B$ is of finite Tor-dimension and assuming that the rings $A'$, $B$, and $B'$ all are regular. Is then the ring $A$ regular?
\end{question}

A helpful result for answering the question might be the following statement.

\begin{lemma} \label{perfect-module-milnor-square--lem}
Let 
\carre{A}{B}{A'}{B'}
be a pullback square of $\mathbf{E}_1$-ring spectra. Then an $A$-module $M$ is perfect if and only if the base changes $M\otimes^\mathrm{L}_AB$ and $M\otimes^\mathrm{L}_AA'$ both are perfect.
\end{lemma}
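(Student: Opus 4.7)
The plan is to dispatch the two directions separately. The forward implication is formal: the base-change functors $(-)\otimes_A^{\mathbf{L}} B\colon\Modinf_A\to\Modinf_B$ and $(-)\otimes_A^{\mathbf{L}} A'\colon\Modinf_A\to\Modinf_{A'}$ are exact and send the unit $A$ to the unit of the respective target. Since $\Perf(R)$ is the smallest thick (i.e.\ stable and idempotent-complete) subcategory of $\Modinf_R$ containing $R$, such functors automatically preserve perfectness. This accounts for the easy half.

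For the converse I would leverage the fact that the given pullback square of $\mathbf{E}_1$-ring spectra induces a pullback square of their module $\infty$-categories,
\[ \Modinf_A \simeq \Modinf_B \times_{\Modinf_{B'}} \Modinf_{A'}, \]
the equivalence being implemented by $M \mapsto (M\otimes_A^{\mathbf{L}} B,\, M\otimes_A^{\mathbf{L}} A',\, \alpha)$, where $\alpha$ is the canonical identification of the two further base changes to $B'$. Granted this gluing, the perfect objects on the left correspond precisely to triples whose first two entries are perfect over $B$ and $A'$ respectively (the third datum is automatically perfect over $B'$, as a base change of a perfect module). Therefore the assumption that $M\otimes_A^{\mathbf{L}} B$ and $M\otimes_A^{\mathbf{L}} A'$ are perfect implies that $M$ is perfect over $A$. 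Equivalently, one can phrase the reconstruction of $M$ through the Mayer--Vietoris fiber sequence
\[ M \To (M\otimes_A^{\mathbf{L}} A')\oplus(M\otimes_A^{\mathbf{L}} B)\To M\otimes_A^{\mathbf{L}} B' \]
arising from the underlying fiber sequence $A\to A'\oplus B\to B'$ of $A$-modules, combined with the descent identification above.

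The main obstacle is the descent statement $\Modinf_A \simeq \Modinf_B\times_{\Modinf_{B'}}\Modinf_{A'}$. This is not automatic for arbitrary pullbacks of $\mathbf{E}_1$-rings, but it does hold whenever one of the maps $B\to B'$ or $A'\to B'$ is surjective on $\pi_0$, which is precisely the situation for the Milnor squares of ordinary rings relevant to this paper. In that setting one can invoke Lurie's descent theorem for $\mathbf{E}_1$-ring pullbacks (Higher Algebra, \S 16.2) or the pullback formalism of Land--Tamme. Once the gluing is in hand, the componentwise description of perfect objects is formal, using that base change preserves filtered colimits so that the pullback of compactly generated stable $\infty$-categories is glued along colimit-preserving functors and compact objects can be detected on the factors.
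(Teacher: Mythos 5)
Your forward implication is fine. The converse, however, rests on a descent equivalence
\[
\Modinf(A) \simeq \Modinf(B)\times_{\Modinf(B')}\Modinf(A')
\]
that is not available in the generality you need, and this is precisely the point the paper's proof is designed to sidestep. For an arbitrary pullback of $\mathbf{E}_1$-ring spectra (even a classical Milnor square of discrete rings) the base-change functor $\Modinf(A)\to\Modinf(B)\times_{\Modinf(B')}\Modinf(A')$ is fully faithful but in general not essentially surjective on unbounded module categories; the results you gesture at (Lurie's SAG \S 16.2 --- note it is SAG, not Higher Algebra --- and related statements) give equivalences only for connective or almost perfect modules, not for $\Modinf$ itself. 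This failure is exactly why Land--Tamme and Tamme introduce the \emph{lax} pullback $\Modinf(B)\overset{\rightarrow}{\times}_{\Modinf(B')}\Modinf(A')$: the functor into it is fully faithful (Land--Tamme~1.7), and --- crucially --- the lax pullback is compactly generated with compact objects exactly $\Perf(B)\overset{\rightarrow}{\times}_{\Perf(B')}\Perf(A')$ (Tamme, Prop.~13). That is the engine of the paper's argument: the image of $M$ lands in the compact objects, and a fully faithful, filtered-colimit-preserving functor reflects compactness, so $M$ is perfect. Your ordinary pullback lacks both properties off the shelf: it need not be compactly generated, and even if it were, the claim that its compact objects are detected levelwise is itself a nontrivial assertion you neither prove nor cite.

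The Mayer--Vietoris sequence you mention also does not rescue the argument by itself: it exhibits $M$ as the fiber of a map between $(M\otimes_A^{\mathrm{L}} B)\oplus(M\otimes_A^{\mathrm{L}} A')$ and $M\otimes_A^{\mathrm{L}} B'$, but these are perfect over $B$, $A'$, $B'$ respectively, not over $A$; restriction of scalars does not preserve perfectness unless $B$ (resp.\ $A'$, $B'$) is perfect as an $A$-module, which is not assumed. So the gap is genuine: you need the lax pullback and Tamme's identification of its compact objects, not an unconditional module-category descent theorem.
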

\begin{proof} \newcommand{\lax}{\overset{\rightarrow}{\times}}
If an $A$-module is perfect, this also holds true for any base change. For the other implication we use that there exists a commutative square
\[ \begin{xy} \xymatrix{
	\Perf(A) \ar@{^{(}->}[r] \ar@{^{(}->}[d] & \Perf(B)\lax_{\Perf(B')}\Perf(A') \ar@{^{(}->}[d] \\
	\RModinf(A) \ar@{^{(}->}[r] & \RModinf(B)\lax_{\RModinf(B')}\RModinf(A')
} \end{xy} \]
where $\RModinf(-)$ denotes the presentable, stable $\infty$-category of (derived) right modules, $\Perf(-)$ its full subcategory of perfect modules (which are precisely the compact objects), and ``$\lax$'' the lax pullback \cite[Def.~5]{tamme-excision}. The horizontal functors (which are induced by the base change functors) are fully faithful \cite[1.7]{land-tamme} and  the left vertical functor is the inclusion of compact objects by definition. The right vertical functor identifies also with the inclusion of compact objects \cite[Prop.~13]{tamme-excision}. Now the claim follows since base change, and hence the horizontal functors, preserve filtered colimits \cite[Lem.~8~(iii)]{tamme-excision}.
\end{proof}

\subsection*{Non-coherence of semi-valuation rings}

\begin{lemma} \label{not-coherent--lem}
Let $(A^+,\pfrak)$ be a semi-valuation ring such that $A=A^+_\pfrak$ is not finitely generated as a module over $A^+$ and such that there exists a regular sequence $X,Y$ in $\pfrak$. Then the ring $A^+$ is not coherent.
\end{lemma}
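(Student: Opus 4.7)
The plan is to exhibit the finitely generated ideal $I := (X,Y) \subseteq A^+$ as a witness to non-coherence by showing that the kernel of the natural presentation $\phi \colon (A^+)^2 \surj I$, $(\alpha,\beta) \mapsto \alpha X + \beta Y$, is isomorphic to the $A^+$-module $A$ itself. Since $A$ is not finitely generated over $A^+$ by assumption, this identification forces $I$ to fail to be finitely presented, whence $A^+$ cannot be coherent. Motivated by the Koszul relation $YX - XY = 0$, the expected isomorphism is $\psi \colon A \to \ker(\phi)$, $a \mapsto (-aY,\,aX)$.

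For $\psi$ to make sense I would first verify that $\pfrak$ is an ideal not only of $A^+$ but also of $A$. For $p \in \pfrak$ and $s \in A^+ \setminus \pfrak$ one has $|p|=0 \leq |s| \neq 0$, so condition~(ii) of Definition~\ref{semi-valuation-ring--def} yields $s \mid p$ in $A^+$ with quotient of value zero, i.e.\ $p/s \in \pfrak$. Thus $\pfrak \cdot A = \pfrak$, and consequently $aX, aY \in \pfrak \subseteq A^+$ for every $a \in A$. This shows that the formula for $\psi$ indeed lands in $(A^+)^2$ and, by direct computation, inside $\ker(\phi)$.

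The central step is to verify that $\psi$ is an isomorphism onto $\ker(\phi)$. Injectivity is clear because $Y$ is a nonzerodivisor in $A^+$ and hence, by flatness of localisation, in $A$ as well. For surjectivity, given $(\alpha,\beta) \in \ker(\phi)$, the equation $\alpha X + \beta Y = 0$ holds in $A$, and the pair $X,Y$ remains a regular sequence in $A$ because $A/XA$ is the localisation of $A^+/XA^+$ at the image of $\pfrak$. Koszul exactness over $A$ therefore produces some $a \in A$ with $(\alpha,\beta) = (-aY,\,aX)$, completing the identification $\ker(\phi) \cong A$ as $A^+$-modules.

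With this in hand the conclusion is immediate: if $A^+$ were coherent, then the finitely generated submodule $I \subseteq A^+$ would be coherent and in particular finitely presented, so the kernel of the surjection $\phi$ from a finitely generated free module would have to be finitely generated — contradicting $\ker(\phi) \cong A$. I expect the main technical point to be the transfer of the regularity of $X,Y$ from $A^+$ to $A$, which hinges on the identification $\pfrak = \pfrak \cdot A$ established in the second paragraph; once this is in place, the Koszul identification of the syzygy module goes through verbatim.
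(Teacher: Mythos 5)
Your proposal mirrors the paper's proof: the same presentation map $\phi$ (up to a sign convention), the same observation that $\pfrak$ is an ideal of $A$ so that $A\cdot(Y,X)$ lands in $(A^+)^2$, and the same Koszul computation over $A$ to identify $\ker(\phi)$ with $A\cdot(Y,X)\cong A$, which is not finitely generated.

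One wrinkle is worth flagging. You read ``regular sequence $X,Y$ in $\pfrak$'' as regularity in $A^+$ and then transfer it to $A$ by localising. But under that reading the lemma is vacuous: if $X$ were a nonzerodivisor on $A^+$ and $Y$ a nonzerodivisor on $A^+/XA^+$, the very Koszul computation you carry out, run over $A^+$ instead of $A$, yields $\ker(\phi)=A^+\cdot(Y,X)$; combined with the inclusion $A\cdot(Y,X)\subseteq\ker(\phi)$ and injectivity of $a\mapsto aX$ on $A$, this forces $A=A^+$, contradicting the assumption that $A$ is not finitely generated over $A^+$. So the hypothesis must mean a regular sequence on $A$ directly (as the example $\Q_p[[X,Y]]$ bears out: there $Y$ is in fact a zerodivisor on $A^+/XA^+$), and no transfer step is needed. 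Relatedly, for injectivity of $\psi$ the clean justification is that $X$ is a nonzerodivisor in $A$; being the second element of a regular sequence does not make $Y$ a nonzerodivisor on the ambient ring. With the hypothesis read on $A$, the rest of your argument is exactly the paper's.
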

\begin{proof}
Consider the homomorphism $\phi \colon A^+\times A^+ \to A^+, (f,g)\mapsto fX-gY$. We see immediately that $A\cdot (Y,X) \subseteq \ker(\phi)$. On the other hand, for $(f,g)\in\ker(\phi)$ we get $gY=0$ in $A^+/(X)$ by regularity so that $g\in X\cdot A^+$. By symmetry, $f\in Y\cdot A^+$. Setting $g=g'X$ and $f=f'Y$ (for suitable elements $f'$ and $g'$ in $A^+$) we get $0=(f'-g')XY$, hence $f'=g'$ so that $(f,g)=f'(Y,X)\in A\cdot(Y,X)$.
\end{proof}

\begin{example}
A concrete instance for the setting of Lemma~\ref{not-coherent--lem} is given by the semi-valuation ring of Example~\ref{example-svr-regular}, i.e.\ $A=\Q_p[[X,Y]]$ and $A^+=\{f\in A\,|\,f(0,0)\in\Z_p\}$ (and $X$ and $Y$ as themselves).
\end{example}

\begin{lemma} \label{not-coherent-2--lem}
Let $(A^+,\pfrak)$ be a semi-valuation ring with non-trivial valuation. If $A=A^+_\pfrak$ is coherent and not regular, then the ring $A^+$ is not coherent.
\end{lemma}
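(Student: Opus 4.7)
The plan is to argue by contradiction: assume $A^+$ is coherent and derive that $A$ is regular, contradicting the hypothesis.

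First I would combine the assumption of coherence with the regularity result already at hand. By Lemma~\ref{semi-valuation-ring-regular--lem}, the ring $A^+$ is $2$-regular, since the valuation of $(A^+,\pfrak)$ is non-trivial. If in addition $A^+$ is coherent, then Proposition~\ref{regularity--prop} (the equivalence of (iv), (v), (vi) for coherent rings) upgrades $2$-regularity to full regularity: every pseudo-coherent $A^+$-module has finite projective dimension.

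Next I would descend regularity along the flat localisation $A^+\to A=A^+_\pfrak$. Let $I\subset A$ be a finitely generated ideal, say $I=(a_1/s_1,\ldots,a_n/s_n)$. Setting $J:=(a_1,\ldots,a_n)\subset A^+$ one has $I=JA$, and since $A$ is flat over $A^+$ the natural map $J\otimes_{A^+}A\to I$ is an isomorphism. Because $A^+$ is coherent, $J$ is finitely presented, hence pseudo-coherent, so by the regularity of $A^+$ it admits a finite resolution by finitely generated projective $A^+$-modules. Tensoring this resolution with the flat $A^+$-algebra $A$ yields a finite projective resolution of $I$ over $A$. Therefore every finitely generated ideal of $A$ has finite projective dimension, i.e.\ $A$ is Glaz-regular. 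Since $A$ is coherent by hypothesis, Proposition~\ref{regularity--prop} (the equivalence (iv)$\Leftrightarrow$(vii)) forces $A$ to be regular, contradicting the hypothesis that $A$ is not regular. Hence $A^+$ cannot be coherent.

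The only step that needs care is the descent of regularity from $A^+$ to its localisation $A$. The non-noetherian setup means one cannot invoke the classical ``localisation of a regular ring is regular'' directly, but this is where the hypothesis that $A$ itself is coherent enters: it allows us to switch from the general ``pseudo-coherent'' definition of regularity to the more tractable Glaz-regularity via finitely generated ideals, which pull back under the flat map $A^+\to A$ to finitely generated (hence, by coherence of $A^+$, finitely presented) ideals of $A^+$.
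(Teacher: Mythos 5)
Your proof is correct and follows the same outline as the paper's: combine $2$-regularity of $A^+$ (Lemma~\ref{semi-valuation-ring-regular--lem}) with the assumed coherence of $A^+$ to upgrade to Glaz-regularity via Proposition~\ref{regularity--prop}, descend to the localisation $A$, and derive a contradiction from the coherence of $A$. The only difference is that the paper delegates the descent step to a citation of \cite[6.2.3]{glaz}, whereas you spell out the flat base change argument explicitly, which is a fine substitute.
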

\begin{proof}
We know that the ring $A^+$ is 2-regular by Lemma~\ref{semi-valuation-ring-regular--lem}. If it was also coherent, then it would be Glaz-regular by Proposition~\ref{regularity--prop}, hence $A$ would be Glaz-regular \cite[6.2.3]{glaz}.
\end{proof}

\begin{example} \label{cusp-over-Q_p--ex}
An instance of the setting of Lemma~\ref{not-coherent-2--lem} comes from starting with any coherent local ring $(A,\pfrak)$ that is not regular and a non-trivial valuation on its residue field $k=A/\pfrak$, e.g. $A=\Q_p[X,Y]_{(X,Y)}/(Y^2-X^3)$ with the $p$-adic valuation on $k=\Q_p$, so that $V=\Z_p$.
\end{example}

\begin{remark}
Assuming that the ideal $\pfrak\subset A^+$ is a \emph{flat} $A^+$-module, then $A^+$ is a coherent ring provided that $A$ is a coherent ring \cite[Thm.~5.1.3]{glaz}.
\end{remark}

\section{K-theory and G-theory of semi-valuation rings}
\label{section--k-theory-semi-valuation-rings}

\begin{thm} \label{k-milnor-square--thm}
For a semi-valuation ring $(A^+,\pfrak)$ with semi-fraction ring $A=A^+_\pfrak$, valuation ring $V=A^+/\pfrak$, and field of fractions $k=A^+_\pfrak/\pfrak$ the induced square
\carre{\K(A^+[t_1,\ldots,t_r])}{\K(A[t_1,\ldots,t_r])}{\K(V[t_1,\ldots,t_r])}{\K(k[t_1,\ldots,t_r])}
of non-connective algebraic K-theory spectra is cartesian for every $r\in\N$.
\end{thm}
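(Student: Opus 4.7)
The plan is to apply the excision theorem for K-theory of pullbacks due to Land and Tamme \cite{land-tamme}, which has already been invoked in the proof of Lemma~\ref{perfect-module-milnor-square--lem}. That theorem states that for a pullback square of $\mathbf{E}_1$-ring spectra which is moreover Tor-independent, the induced square on any localizing invariant---in particular, on non-connective algebraic K-theory---is cartesian. The strategy therefore reduces to (a) verifying that the displayed square is an actual (underived) pullback of rings, and (b) checking that the derived and underived pullbacks agree, i.e.\ a Tor-independence condition.

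For (a), I would note that the polynomial ring functor $(-)[t_1,\ldots,t_k] = (-)\otimes_\Z \Z[t_1,\ldots,t_k]$ is exact and preserves finite limits as well as surjections; applying it to the Milnor square $(\square)$ of Lemma~\ref{semi-valuation-rings-characterisation--lem} yields a pullback square of rings whose vertical maps remain surjective. For (b), the crucial observation is that the horizontal map $A^+ \to A = A^+_\pfrak$ is a localization, hence flat. Localization is preserved under the base change $(-)\otimes_\Z\Z[t_1,\ldots,t_k]$, so the map $A^+[t_1,\ldots,t_k] \to A[t_1,\ldots,t_k]$ is again a flat localization. Consequently
\[
\Tor_i^{A^+[t_1,\ldots,t_k]}\bigl(A[t_1,\ldots,t_k],\,V[t_1,\ldots,t_k]\bigr) = 0 \qquad \text{for all } i\geq 1,
\]
so that the derived tensor product $A[t_1,\ldots,t_k]\otimes^{\L}_{A^+[t_1,\ldots,t_k]} V[t_1,\ldots,t_k]$ coincides with its underived counterpart $k[t_1,\ldots,t_k]$.

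Given (a) and (b), Land-Tamme's theorem directly delivers that the displayed square on $\K$-spectra is cartesian. I do not expect any serious obstacle: the argument is essentially a flatness check combined with the modern pullback-excision formalism. The only point that requires care is that classical Milnor patching (Bass, Karoubi, Swan) would only produce cartesianness up to $\K_1$, which would be insufficient for the non-connective statement we need; it is precisely the Land-Tamme refinement that makes the full non-connective K-theory square cartesian under the Tor-independence hypothesis supplied by flatness of the semi-fraction localization $A^+ \to A$.
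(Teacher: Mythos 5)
Your proposal is correct, but it takes a genuinely different route from the paper. The paper's proof first reduces to the case where the valuation ring $V$ has finite rank (via a filtered-colimit argument using the dimension inequality for valued field extensions), so that $V$ is microbial and one can find an element $s\in A^+$ making $\pi\colon A^+\to V$ an analytic isomorphism along $S=\{s^n\}$ with $S^{-1}A^+=A$ and $S^{-1}V=k$; it then invokes Weibel's analytic isomorphism theorem. You instead observe that the Milnor square $(\square)$ is already Tor-independent for free, because $A^+\to A=A^+_\pfrak$ is a localization and hence flat, giving $A\otimes^{\mathbf{L}}_{A^+}V\simeq A\otimes_{A^+}V= V_\pfrak = k$, and then apply the excision result of Land--Tamme (equivalently, Tamme's earlier result: a Milnor square with $A'\otimes^{\mathbf{L}}_A B\to B'$ an equivalence is sent to a cartesian square by any localizing invariant). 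This is cleaner: it avoids the filtered-colimit reduction entirely and works uniformly in the rank of $V$. Both approaches are ultimately instances of Tor-independent excision --- Weibel's theorem is a hands-on special case of it --- so the mathematical content is close, but your route makes the flatness of the localization $A^+\to A$ do all the work. Your polynomial-variable bookkeeping is also correct, since $(-)\otimes_\Z\Z[t_1,\ldots,t_k]$ is exact on abelian groups (so preserves the pullback of rings and surjections) and preserves the localization (so preserves flatness and the identification of the derived with the underived corner). One small caveat worth flagging: be precise with the citation --- the statement you need is the Tor-independent case rather than the full $\odot$-ring machinery, and it is most directly extracted from Tamme's \emph{Excision in algebraic K-theory revisited} \cite{tamme-excision} or from the corresponding example/corollary in \cite{land-tamme}; otherwise the argument is complete.
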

\begin{proof}
Let $\F$ be the prime field of $k$ so that the valuation ring $V\cap\F$ has rank $\leq 1$. Let $I$ be the set of all subextensions $\ell\subset k$ that have finite transcendence degree over $\F$. Note that $k \cong \colim_{\ell\in I} \ell$ within the category of rings. Now let $\ell\in I$. By the ``dimension inequality'' \cite[Thm.~3.4.3, Cor.~3.4.4]{engler-prestel}, the valuation ring $V_\ell := V\cap\ell$ has finite rank. Hence $V$ is a filtered colimit of valuation rings of finite rank. Set $A_\ell$ to be the preimage of $\ell$ in $A$ which is a local ring with residue field $\ell$. Hence the preimage $A^+_\ell$ of $V$ in $A_\ell$ is a semi-valuation ring and the square $(\square)$ is the filtered colimit of its restrictions along $\ell\inj k$ for all $\ell\in I$.
 
Since K-theory commutes with filtered colimits of rings we may assume that $V$ has finite rank, hence it is a microbial valuation ring (i.e. has a prime ideal of height 1). In this case, there exists an element $s\in A^+$ such that $V[\bar{s}\inv]=K$ where $\bar{s}=\pi(s)$ for the projection $\pi\colon A^+\to V$. Then one checks easily that $s\in A^\times$, $A^+[s\inv]= A$, and $\pfrak \subseteq s^n\cdot A^+$ for all $n\in\N$. Hence the map $\pi\colon A^+\to V$ is an analytic isomorphism along $S=\{s^n\,|\,n\in\N\}$ and the same holds true for the induced maps $A^+[t_1,\ldots,t_r] \to V[t_1,\ldots,t_r]$ for all $r\in\N$. By Weibel's analytic isomorphism theorem \cite[Thm.~1.3]{weibel-analytic} the square of Theorem~\ref{k-milnor-square--thm} is cartesian.
Note that \emph{every} Milnor square induces a cartesian square on non-positive K-theory $\K_{\leq 0}$ \cite[Ch.~XII, Thm.~(8.3), p.~677]{bass68}.
\end{proof}

\begin{cor}
Let $(A^+,\pfrak)$ be a semi-valuation ring. Then for $n<0$ the canonical morphism $\K_n(A^+)\to\K_n(A)$ is an isomorphism. If $A=A^+_\pfrak$ is noetherian of finite dimension $d$, then 
\begin{enumerate}
\item $\K_n(A^+)=0$ for $n<-\dim(A)$,
\item $\K_n(A^+)\To[\simeq]\K_n(A^+[t_1,\ldots,t_k])$ for $n\leq -d$ and $k\geq 0$, and
\item $\K_{-d}(A^+) \cong \Ho_\cdh^d(\Spec(A),\Z)$.
\end{enumerate}
\end{cor}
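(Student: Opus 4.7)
The plan is to extract a Mayer--Vietoris long exact sequence from the cartesian square of Theorem~\ref{k-milnor-square--thm} and combine it with the vanishing of negative K-theory for (stably coherent) regular rings. Specialising the theorem to no polynomial variables gives a fibre sequence
\[
\K(A^+) \To \K(A) \oplus \K(V) \To \K(k)
\]
of spectra. Since $V$ is a valuation ring (in particular stably coherent regular by Banerjee--Sadhu) and $k$ is a field, both satisfy $\K_n = 0$ for $n < 0$. The associated long exact sequence then yields $\K_n(A^+) \cong \K_n(A)$ for $n \leq -2$ without further work, while at $n = -1$ one needs the additional observation that $\K_0(V) \to \K_0(k)$ is surjective (it sends the generator $[V]$ to the generator $[k]$), so that the connecting homomorphism $\K_0(k) \to \K_{-1}(A^+)$ vanishes.

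For the statements under the noetherian hypothesis, I would invoke Kerz--Strunk--Tamme's resolution of Weibel's conjecture, which for any noetherian ring $R$ of dimension $d$ gives $\K_n(R) = 0$ for $n < -d$ and $\K_{-d}(R) \cong \Ho^d_\cdh(\Spec R, \Z)$. Combined with the unconditional isomorphism $\K_n(A^+) \cong \K_n(A)$ for $n < 0$, this immediately gives (i) and (iii).

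For (ii), I would repeat the Mayer--Vietoris argument using Theorem~\ref{k-milnor-square--thm} for arbitrary polynomial variable count, noting that $V[t_1,\ldots,t_k]$ and $k[t_1,\ldots,t_k]$ remain stably coherent regular with vanishing negative K-theory. This yields $\K_n(A^+[t_1,\ldots,t_k]) \cong \K_n(A[t_1,\ldots,t_k])$ for $n < 0$. The desired statement then follows since Kerz--Strunk--Tamme's theorem additionally gives homotopy invariance $\K_n(A) \cong \K_n(A[t_1,\ldots,t_k])$ for $n \leq -d$, using the $\A^1$-invariance of $\cdh$-cohomology with constant coefficients to handle the edge case $n = -d$ and the vanishing in lower degrees.

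Main obstacle: there is no serious conceptual difficulty; the proof simply chains Theorem~\ref{k-milnor-square--thm} with the deep external inputs from Kerz--Strunk--Tamme and Banerjee--Sadhu. The only mildly delicate points are verifying that the connecting map at $n = -1$ vanishes (which is the elementary surjectivity of $\K_0(V) \to \K_0(k)$) and, for (ii) when $d$ is small, tracking compatibility of the two Mayer--Vietoris squares under the polynomial extension via a routine five-lemma argument.
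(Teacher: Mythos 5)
Your proposal is correct and follows essentially the same approach as the paper, whose proof is a one-liner citing Theorem~\ref{k-milnor-square--thm}, the vanishing of negative K-theory of polynomial rings over valuation rings (and fields), and Kerz--Strunk--Tamme for $A$; you have simply unfolded the Mayer--Vietoris bookkeeping. One small caution on your parenthetical about $d$ small: for $d=0$ the naive five-lemma window needs $\K_1(A)\to\K_1(A[t_1,\ldots,t_k])$ to be surjective, which is not available; the clean fix is to note that cartesianity of the two Milnor squares forces $\fib\bigl(\K(A^+)\to\K(A^+[\vec{t}])\bigr)\simeq\fib\bigl(\K(A)\to\K(A[\vec{t}])\bigr)$ because the corresponding fibres for $V$ and $k$ vanish, and then the boundary degree $n=-d$ follows directly from Kerz--Strunk--Tamme.
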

\begin{proof}
This follows from Theorem~\ref{k-milnor-square--thm}, the vanishing of negative K-theory of polynomial rings over valuation rings, and the corresponding statements for the ring $A$ which hold due to a result of Kerz-Strunk-Tamme \cite[Thm.~B, Thm.~D]{kst16}.
\end{proof}

\begin{cor} \label{K-semi-valuation-ring--cor}
Let $(A^+,\pfrak)$ be a semi-valuation ring whose semi-fraction ring $A=A^+_\pfrak$ is stably coherent and regular. Then the canonical maps
\begin{enumerate}
\item $\K_{\geq 0}(A^+) \To \K(A^+)$, 
\item $\K(A^+) \To \K(A^+[t_1,\ldots,t_k])$, and
\item $\K(A^+) \To \KH(A^+)$
\end{enumerate}
are equivalences.
\end{cor}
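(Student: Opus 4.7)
The plan is to deduce all three statements from Theorem~\ref{k-milnor-square--thm}, which exhibits $\K(A^+[t_1,\ldots,t_k])$ as the homotopy pullback of $\K(A[t_1,\ldots,t_k])$ and $\K(V[t_1,\ldots,t_k])$ over $\K(k[t_1,\ldots,t_k])$ for every $k\geq 0$, where $V=A^+/\pfrak$ and $k$ is the residue field. The three ``known'' corners of this square are all stably coherent and regular: $A$ by hypothesis, the valuation ring $V$ by the cited classical results (Kelly--Morrow, Banerjee--Sadhu), and the field $k$ trivially. Hence, by the Quillen--Waldhausen--Swan homotopy invariance results recalled in the introduction, the canonical maps $\K(A)\to\K(A[t_1,\ldots,t_k])$, $\K(V)\to\K(V[t_1,\ldots,t_k])$ and $\K(k)\to\K(k[t_1,\ldots,t_k])$ are equivalences for every $k$.

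For part (ii), I would compare the cartesian squares given by Theorem~\ref{k-milnor-square--thm} for $0$ and for $k$ polynomial generators, linked by the natural inclusion $A^+\hookrightarrow A^+[t_1,\ldots,t_k]$. Three of the four comparison maps on corners are equivalences by the previous paragraph, hence the fourth, $\K(A^+)\to\K(A^+[t_1,\ldots,t_k])$, is too. Part (iii) then follows formally: using the description $\KH(A^+)\simeq |\K(A^+[\Delta^\bullet])|$, part (ii) implies that for every $n$ the inclusion $A^+\hookrightarrow A^+[\Delta^n]$ induces an equivalence on $\K$, and since each face and degeneracy map of $A^+[\Delta^\bullet]$ restricts to the identity on $A^+$, the simplicial spectrum $\K(A^+[\Delta^\bullet])$ is degree-wise equivalent to the constant simplicial spectrum on $\K(A^+)$, whose realisation is $\K(A^+)$.

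For part (i), I would extract the Mayer--Vietoris long exact sequence from Theorem~\ref{k-milnor-square--thm} with $k=0$. Since $A$, $V$, and $k$ are all stably coherent regular, their negative K-theory vanishes (by Schlichting, or equivalently by combining Bass's fundamental theorem with polynomial homotopy invariance of $\K_0$). The Mayer--Vietoris sequence therefore gives $\K_n(A^+)=0$ directly for $n\leq -2$; for $n=-1$ the only potentially nontrivial contribution is the cokernel of $\K_0(A)\oplus\K_0(V)\to\K_0(k)$, but since $V$ is a local valuation ring with residue field $k$, already the summand $\K_0(V)\cong\Z\to\K_0(k)\cong\Z$ sends $[V]$ to $[k]$ and is surjective, so $\K_{-1}(A^+)=0$ as well. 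The substantive content is entirely packaged in Theorem~\ref{k-milnor-square--thm}; once that cartesian square is granted, the remainder is formal and I do not expect any further technical obstacle.
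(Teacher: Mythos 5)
Your argument is correct and takes essentially the same route as the paper, which simply states that the three equivalences ``follow formally'' from Theorem~\ref{k-milnor-square--thm} together with the corresponding known results for stably coherent regular rings (Swan) and valuation rings (Waldhausen, Kelly--Morrow). You have spelled out the formal details the paper leaves implicit: comparing the two cartesian squares to get (ii), the level-wise equivalence of simplicial spectra to get (iii), and the Mayer--Vietoris sequence together with vanishing of negative K-theory of the three known corners and surjectivity of $\K_0(V)\to\K_0(k)$ to get (i).
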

\begin{proof}
The claimed equivalences (i)-- (iii) follow formally from Theorem~\ref{k-milnor-square--thm} and by the known corresponding statements for the K-theory of stably coherent regular rings \cite{swan-coherent} and valuation rings \cite{waldhausen-free-1}, see also Kelly-Morrow \cite[Thm.~3.3]{kelly-morrow}.
\end{proof}

\begin{remark}[Regularity does not imply K-regularity] \label{k-regularity--rem}
We say that a ring $A$ is \textbf{K-regular} if for every $k\geq 1$ the canonical map $\K(A)\to\K(A[t_1,\ldots,t_k])$ is an equivalence of spectra. From Theorem~\ref{k-milnor-square--thm} we deduce that a semi-valuation ring $A^+$ is K-regular if and only if its semi-fraction ring $A$ is K-regular. Given a coherent local ring $A$ which is \emph{not} K-regular together with a non-trivial valuation on its residue field (e.g.\ the ring $A$ in Example~\ref{cusp-over-Q_p--ex}), then the associated semi-valuation ring $A^+$ is a non-coherent (Lemma~\ref{not-coherent-2--lem}) and regular (Lemma~\ref{semi-valuation-ring-regular--lem}) ring which which is \emph{not} K-regular.
\end{remark}

\begin{remark}[G-theory]
Let $A$ be a ring. Denote by $\PCoh(A)$ the full subcategory of $\Mod(A)$ spanned by pseudo-coherent modules (Definition~\ref{coherent--def}); it is an exact subcategory \cite[II.7.1.4]{weibel}.
The \df{G-theory} of $A$ is defined as
\[
 \Ge(A) := \K(\PCoh(A))
\]
where $\K$ denotes Schlichting's \emph{non-connective} K-theory for exact categories \cite{schlichting04}; c.f.\ Thomason-Trobaugh \cite[3.11.1]{tt90} and Weibel's \emph{K-book} \cite[V.2.7.4]{weibel}; note that this can also be realised as the K-theory of a stable $\infty$-category, see \cite[\S 8]{henrard}.
If a ring $A$ is regular, then the canonical map $\K(A)\to\Ge(A)$ is an equivalence, since -- by definition of regularity -- the inclusion $\Vect(A)\inj\PCoh(A)$ satisfies the conditions of the resolution theorem \cite[V.3.1]{weibel}.
\end{remark}

\section{K-theory of relative Riemann-Zariski spaces}
\label{section--k-theory-relative-ZR-spaces}

In this section we generalise the results from the author's previous article on the K-theory of admissible Zariski-Riemann spaces \cite{k-admissible-zr} to the setting of Temkin's relative Riemann-Zariski spaces \cite{temkin-relative-zr};\footnote{The terms ``Zariski'' and ``Riemann'' appear in different orders in the literature, cf.\ \cite{temkin-relative-zr} vs. \cite{kst}, and the author tries to be coherent with these sources.} the former are defined for the inclusion of an open subscheme whereas the latter are defined for an arbitrary separated morphism. The statements are reduced to the stalks of these spaces which are semi-valuation rings so that we can use the results from section~\ref{section--k-theory-semi-valuation-rings}.

\begin{notation*}
In this section let $f\colon Y\to X$ be a separated morphism between quasi-compact and quasi-separated schemes.
\end{notation*}

\begin{defin}[{Temkin \cite[\S 2.1]{temkin-relative-zr}}]
A $\mathbf{Y}$\df{-modification} of $X$ is a factorisation $Y\to[g_i] X_i \to[f_i] X$ of $f$ into a schematically dominant morphism $g_i$ and a proper morphism $f_i$. We denote by $\Mdf_Y(X)$ the category of $Y$-modifications of $X$ together with compatible morphisms. The \df{relative Riemann-Zariski space} $\RZ_Y(X)$ is the limit $\lim_iX_i$ within the category of locally ringed spaces, indexed by the cofiltered category $\Mdf_Y(X)$.
\end{defin}

\begin{lemma} \label{semi-valuation-relative-zr--lem}
Let $(A^+,\pfrak)$ be a semi-valuation ring with semi-fraction ring $A=A^+_\pfrak$. Then the canonical projection
	\[
	\RZ_{\Spec(A)}(\Spec(A^+)) \To \Spec(A^+)
	\]
is an isomorphism. 
\end{lemma}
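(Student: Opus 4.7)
The plan is to show that the category $\Mdf_{\Spec(A)}(\Spec(A^+))$ is essentially trivial, i.e.\ every object is isomorphic to the identity modification. Since the limit defining $\RZ_Y(X)$ is then taken over a cofiltered category with initial object $\mathrm{id}_{\Spec(A^+)}$, it collapses to $\Spec(A^+)$.

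The crucial structural input is Lemma~\ref{semi-valuation-rings-characterisation--lem}: the Milnor square $(\square)$ is cartesian as a square of rings, hence dualises to a pushout square of affine schemes, and in fact to a pushout
\[
\Spec(A^+) \;\cong\; \Spec(A) \sqcup_{\Spec(k)} \Spec(V)
\]
in the category of locally ringed spaces (Milnor patching). I would cite this as the tool for glueing morphisms out of $\Spec(A^+)$.

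So let $Y = \Spec(A) \to X_i \to[f_i] \Spec(A^+) = X$ be a $Y$-modification, with $g_i\colon Y \to X_i$ schematically dominant and $f_i$ proper. First, since $V = A^+/\pfrak$ is a valuation ring and $f_i$ is proper and separated, the valuative criterion for properness applied to the morphism $\Spec(k) \to X_i$ (obtained from $g_i$ by restriction along $\Spec(k) \hookrightarrow \Spec(A)$) together with $\Spec(V) \to \Spec(A^+)$ yields a unique lift $\Spec(V) \to X_i$ over $X$. By the uniqueness of this lift (applied over the generic point $\Spec(k)$), the maps $\Spec(A) \to X_i$ and $\Spec(V) \to X_i$ agree upon restriction to $\Spec(k)$, hence by the pushout description above they combine into a morphism $s\colon \Spec(A^+) \to X_i$ over $\Spec(A^+)$. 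By the universal property of the pushout, $f_i \circ s = \mathrm{id}_{\Spec(A^+)}$, so $s$ is a section.

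To finish, note that a section of a separated (in particular proper) morphism is a closed immersion, so $s(\Spec(A^+)) \subset X_i$ is a closed subscheme. The schematically dominant morphism $g_i\colon \Spec(A) \to X_i$ factors through this closed subscheme (as $g_i = s \circ f_i \circ g_i$), and schematic dominance forces the closed subscheme to equal $X_i$. Hence $s$ is an isomorphism, so every $Y$-modification is canonically isomorphic to the identity, and the claim follows. The main subtlety will be ensuring the glueing step for locally ringed spaces is legitimate from the Milnor square---this is where I would appeal carefully to the fact that $A^+$ being the pullback of rings really does give the pushout of the corresponding affine schemes in locally ringed spaces.
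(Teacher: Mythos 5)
Your argument is correct and follows essentially the same route as the paper's: both exploit the bicartesian (Milnor) square together with the valuative criterion of properness to produce a lift $\Spec(V)\to X_i$ and then glue it with $g_i$ to a section $s\colon\Spec(A^+)\to X_i$, which makes $\id_{\Spec(A^+)}$ initial (cofinal for the limit) in $\Mdf_{\Spec(A)}(\Spec(A^+))$. The further deduction that $s$ is actually an isomorphism (using schematic dominance of $g_i$) is correct and a pleasant strengthening, but the paper stops once the section exists, since that alone already collapses the cofiltered limit.
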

\begin{proof}
We have a bicartesian square
\carre{\Spec(A^+_\pfrak/\pfrak)}{\Spec(A)}{\Spec(A^+/\pfrak)}{\Spec(A^+).}
Every $\Spec(A)$-modification $X\to\Spec(A^+)$ yields a lift $\Spec(A^+/\pfrak)\to X$ by the valuative criterion of properness. Hence we get a section $\Spec(A^+)\to X$ so that $\id_{\Spec(A^+)}$ is cofinal in $\Mdf_{\Spec(A)}(\Spec(A^+))$.
\end{proof}

\begin{cor}
Let $A$ and $A^+$ be as in Lemma~\ref{not-coherent--lem}. Then the relative Riemann-Zariski space $\RZ_{\Spec(A)}(\Spec(A^+))$ is not cohesive (i.e.\ its structure sheaf is not coherent over itself).
\end{cor}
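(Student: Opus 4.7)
The plan is to reduce the statement to Lemma~\ref{not-coherent--lem} via the identification provided by Lemma~\ref{semi-valuation-relative-zr--lem}.

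First I would invoke Lemma~\ref{semi-valuation-relative-zr--lem}, which furnishes an isomorphism of locally ringed spaces
\[
\RZ_{\Spec(A)}(\Spec(A^+)) \iso \Spec(A^+).
\]
Under this identification the structure sheaves agree, so the cohesiveness of the Riemann-Zariski space is equivalent to the coherence of $\O_{\Spec(A^+)}$ as a module over itself.

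Next I would use the standard correspondence between ring-theoretic and sheaf-theoretic coherence: for any ring $R$, the structure sheaf $\O_{\Spec(R)}$ is coherent over itself if and only if $R$ is coherent in the sense of Definition~\ref{coherent--def}. Concretely, a finitely generated ideal $I\subset R$ is finitely presented as an $R$-module if and only if the corresponding quasi-coherent ideal sheaf $\widetilde{I}\subset\O_{\Spec(R)}$ is of finite presentation, and these finitely generated ideals detect coherence of $\O_{\Spec(R)}$. Applied to $R=A^+$ this reduces the problem to showing that the ring $A^+$ itself is not coherent.

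Finally, the hypothesis of the corollary is precisely that $A$ and $A^+$ fulfil the assumptions of Lemma~\ref{not-coherent--lem}, whose content is exactly the failure of coherence of $A^+$. Concatenating these three steps gives the claim. The only non-formal ingredient is the translation between sheaf and ring coherence in the middle step, but this is a routine property of the quasi-coherent sheafification functor $M\mapsto\widetilde{M}$ and poses no real obstacle; everything else is a purely formal chain of references.
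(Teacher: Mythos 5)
Your proposal is correct and takes precisely the same route as the paper: the isomorphism from Lemma~\ref{semi-valuation-relative-zr--lem} reduces cohesiveness of the Riemann-Zariski space to coherence of the ring $A^+$, which fails by Lemma~\ref{not-coherent--lem}. The paper leaves the sheaf-to-ring translation implicit, but your spelling it out is the same argument, just slightly more explicit.
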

\begin{proof}
This follows from Lemma~\ref{not-coherent--lem} together with Lemma~\ref{semi-valuation-relative-zr--lem}.
\end{proof}

The following corollary answers the question whether admissible Zariski-Riemann spaces are cohesive, see \cite[Prop.~3.10]{k-admissible-zr} and the preceding paragraph in loc.\ cit..

\begin{cor}
Let $X$ be a quasi-compact and quasi-separated scheme, $U\subseteq X$ a quasi-compact open subscheme, $\skp{X}_U$ the associated admissible Zariski-Riemann space, and $i\colon \tilde{Z} \inj \skp{X}_U$ be the inclusion of the closed complement with reduced structure. There exists an example of this situation such that the locally ringed space $\skp{X}_U$ is not cohesive.
\end{cor}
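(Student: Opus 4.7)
The plan is to reduce to the preceding corollary on relative Riemann--Zariski spaces by exhibiting a concrete open immersion $U \hookrightarrow X$ such that the admissible Zariski--Riemann space $\skp{X}_U$ coincides with a relative space covered by Lemma~\ref{semi-valuation-relative-zr--lem}.

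First I would pick a non-coherent semi-valuation ring $A^+$ whose semi-fraction ring is $A = A^+_\pfrak$: for instance, the ring from Example~\ref{cusp-over-Q_p--ex} (with $A = \Q_p[X,Y]_{(X,Y)}/(Y^2-X^3)$ and $V = \Z_p$), or the ring $\{f \in \Q_p[[X,Y]] : f(0,0) \in \Z_p\}$ following Lemma~\ref{not-coherent--lem}. The key observation is $A^+[p\inv] = A$: every $f \in A$ has residue in $\Q_p$, and after multiplication by a sufficiently large power of $p$ this residue lies in $\Z_p$, so $p^n f \in A^+$ for $n \gg 0$. Consequently the localisation morphism $\Spec(A) \hookrightarrow \Spec(A^+)$ identifies with the distinguished open $D(p)$. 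I set $X := \Spec(A^+)$ (affine, hence qcqs) and $U := D(p) \subseteq X$ (qc open); since $A^+$ is a local domain, the generic point of $X$ already lies in $U$, so $U$ is scheme-theoretically dense, placing us in the admissible situation.

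Next I would identify the admissible Zariski--Riemann space $\skp{X}_U$ with Temkin's relative space $\RZ_U(X) = \RZ_{\Spec(A)}(\Spec(A^+))$. For an open immersion $U \hookrightarrow X$ into a qcqs scheme, a Temkin-style $U$-modification (a factorisation of $U \to X$ into a schematically dominant morphism followed by a proper one) is the same datum as a $U$-admissible blowup of $X$ appearing in the definition of $\skp{X}_U$; the schematic-dominance condition precisely ensures that $U$ sits inside the modification as a schematically dense open subscheme, so that the proper morphism restricts to an isomorphism over $U$. Granting this, Lemma~\ref{semi-valuation-relative-zr--lem} yields an isomorphism $\skp{X}_U \cong \Spec(A^+)$ of locally ringed spaces. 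Since the structure sheaf of an affine scheme $\Spec(R)$ is coherent over itself if and only if $R$ is a coherent ring, and $A^+$ is not coherent by choice (Lemma~\ref{not-coherent--lem} or Lemma~\ref{not-coherent-2--lem}), the space $\skp{X}_U$ fails to be cohesive, producing the required example.

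The main obstacle is making precise the comparison between the admissible Zariski--Riemann space $\skp{X}_U$ from \cite{k-admissible-zr} and Temkin's relative space $\RZ_U(X)$. Concretely, one must check that their cofiltered indexing categories of modifications are mutually cofinal, so that the two limits in the category of locally ringed spaces agree. Once this identification is in place, everything else in the proposal is routine bookkeeping.
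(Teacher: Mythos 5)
Your proposal is correct and follows essentially the same route as the paper: choose a non-coherent semi-valuation ring $(A^+,\pfrak)$ with $A=A^+[p\inv]$, so that $\Spec(A)\hookrightarrow\Spec(A^+)$ is a schematically dense open immersion, and then combine the identification $\skp{\Spec(A^+)}_{\Spec(A)}\cong\RZ_{\Spec(A)}(\Spec(A^+))$ with Lemma~\ref{semi-valuation-relative-zr--lem}. The comparison you flag as the main obstacle is simply imported from \cite[2.7]{k-admissible-zr} (cf.\ Remark~\ref{comparison--rem}) rather than reproven, and the paper uses the power-series example $A=\Q_p[[X,Y]]$, $A^+=\{f\in A\mid f(0,0)\in\Z_p\}$ rather than the cusp.
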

\begin{proof}
Consider the rings $A:=\Q_p[[X,Y]]$ and $A^+=\{f\in A\,|\,f(0,0)\in\Z_p\}$ from Example~\ref{example-svr-regular}. Since $A=A^+[p\inv]$, the induced morphism $\Spec(A) \to \Spec(A^+)$ is an open immersion, so that $\skp{A^+}_A \cong \RZ_{\Spec(A)}(\Spec(A^+))$ is not cohesive.
\end{proof}

\begin{lemma} \label{ZR-stalks-are-semi-valuation-rings--lem}
For any point $x\in\RZ_Y(X)$, the stalk $\O_{\RZ_Y(X),x}$ is a semi-valuation ring.
\end{lemma}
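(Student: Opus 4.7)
My plan is to apply characterisation (ii) of Lemma~\ref{semi-valuation-rings-characterisation--lem}: I will exhibit the stalk $A^+:=\O_{\RZ_Y(X),x}$ as a local ring together with a prime ideal $\pfrak\subset A^+$ whose quotient $A^+/\pfrak$ is a valuation ring. First I would unwind the limit description: a point $x$ in $\RZ_Y(X)=\lim_iX_i$, the limit taken in locally ringed spaces, is a compatible family $(x_i)_{i\in\Mdf_Y(X)}$ of points $x_i\in X_i$. Every transition morphism $X_j\to X_i$ is a morphism of schemes over $X$ compatible with the structure maps from $Y$, hence sends $x_j$ to $x_i$ and induces a local homomorphism $\O_{X_i,x_i}\to\O_{X_j,x_j}$. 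Thus
\[
A^+\;=\;\colim_i\,\O_{X_i,x_i}
\]
is a filtered colimit of local rings with local transition maps, which is itself local with maximal ideal $\mfrak^+=\colim_i\mfrak_{x_i}$.

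To produce the valuative ideal, I would exploit that each $g_i\colon Y\to X_i$ is schematically dominant, so that $Y\times_{X_i}\Spec\O_{X_i,x_i}$ is schematically dense in $\Spec\O_{X_i,x_i}$. The strategy is to pick a coherent system of generic points $\eta_i$ of these pullbacks and define $\pfrak_i:=\ker(\O_{X_i,x_i}\to\kappa(\eta_i))$. The ideals $\pfrak_i$ should be compatible along the transition maps, producing a prime $\pfrak:=\colim_i\pfrak_i\subset A^+$ whose residue field $\kappa:=\Frac(A^+/\pfrak)$ is a localisation of a residue field of $Y$.

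To verify that $V:=A^+/\pfrak$ is a valuation ring of $\kappa$, I would combine blow-ups with the valuative criterion of properness: given $\alpha\in\kappa^\times$ represented as $a/b$ with $a,b\in\O_{X_i,x_i}/\pfrak_i$, blow up the ideal $(a,b)\subset\O_{X_i,x_i}$ to obtain a new modification $X_j\to X_i$. This remains a $Y$-modification because $Y$ maps into the locus where $(a,b)$ is already invertible (as $a,b\notin\pfrak_i$ on a schematically dense open of $Y$), so the blow-up is an isomorphism there. At the preferred point $x_j$ over $x_i$ the ideal $(a,b)$ becomes principal, so either $a\mid b$ or $b\mid a$ holds in $\O_{X_j,x_j}/\pfrak_j$, and passing to the colimit gives $\alpha\in V$ or $\alpha^{-1}\in V$.

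The main obstacle will be the second step: making the coherent choice of generic points $\eta_i$ precise across the entire cofiltered system $\Mdf_Y(X)$ and ensuring the resulting $\pfrak$ is well-defined with the correct residue field. This is the technical heart of Temkin's analysis; essentially, one has to show that the point $x$ determines a unique point $y\in Y$ together with a unique valuation on (a suitable localisation of) the residue field of $y$, so that the prime $\pfrak$ is intrinsic to $x$ and independent of the modification chosen to represent it.
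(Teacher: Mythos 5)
The paper's own proof is a one-line citation: since $f$ is separated it is decomposable by Temkin's Theorem~1.1.3, and Temkin's Proposition~2.2.1 then asserts that all stalks of $\RZ_Y(X)$ are semi-valuation rings. Your proposal instead sketches a direct proof, and the broad outline (stalk as a filtered colimit of local rings, valuative ideal from schematic dominance, valuation property via blow-ups and the valuative criterion) does capture the geometry behind Temkin's result. But, as you flag yourself, the sketch leaves open its hardest step: showing that $x\in\RZ_Y(X)$ canonically determines a point of $Y$ and with it a coherent system of primes $\pfrak_i\subset\O_{X_i,x_i}$. That is precisely the content of Temkin's analysis; without it your $\pfrak$ is not well-defined and the proposal is a programme rather than a proof.

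Two further details would need repair even granting the coherent choice. First, $Y\times_{X_i}\Spec\O_{X_i,x_i}$ is a fiber product, not a subscheme of $\Spec\O_{X_i,x_i}$, so ``schematically dense in $\Spec\O_{X_i,x_i}$'' is not the right formulation; schematic dominance of $g_i$ only gives injectivity of $\O_{X_i,x_i}\to((g_i)_*\O_Y)_{x_i}$, and the fiber may have several points competing to be the ``generic'' one. Second, blowing up $(a,b)\subset\O_{X_i,x_i}$ does not by itself yield an object of $\Mdf_Y(X)$: you must spread $(a,b)$ out to an ideal sheaf on $X_i$, take the schematic image of $Y$ in the global blow-up, and check that the resulting factorisation is again a schematically dominant map followed by a proper one. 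This uses that $f$ is separated, which your sketch never invokes although it is exactly what the paper's citation rests on (separated $\Leftrightarrow$ decomposable). Completing your programme would essentially reproduce Temkin's proof; the paper's route of citing it directly is the efficient one.
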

\begin{proof}
This follows from \cite[Prop~2.2.1]{temkin-relative-zr} since the morphism $f\colon Y\to X$ is assumed to be separated which is equivalent to being decomposable by \cite[Thm.~1.1.3]{temkin-relative-zr}.
\end{proof}

\begin{defin} \label{KZR--def}
For an open subset $V$ of $\RZ_Y(X)$ we define the set $\Model(V)$ whose elements are open subsets $V'$ of some $X'\in\Mdf_Y(X)$ such that $p_{X'}\inv(V')=V$. Defining  $V'\leq V''$ if $V''=q\inv(V')$ for a morphism $q\colon X''\to X'$ in $\Mdf_Y(X)$ we get a partial order on $\Model(V)$. Since $\Mdf_Y(X)$ is cofiltered, the sets $\Model(V)$ are filtered posets. If $V$ is quasi-compact, then $\Model(V)$ is non-empty \cite[ch.~0, 2.2.9]{fuji-kato}. In particular, $\Mdf_Y(X)  = \Model(\RZ_Y(X))  $.
We define the \df{K-theory on the Riemann-Zariski space} to be the presheaf
	\[
	\KZR \,\colon\, \Open^\mathrm{qc}(\RZ_Y(X))\op \To \Sp,\quad V \mapsto \colim_{V'\in\Model(V)} \K(V'),
	\]
on the poset $\Open^\mathrm{qc}(\RZ_Y(X))$ of quasi-compact open subsets of $\RZ_Y(X)$. Analogously, we define the presheaf $\KH^\mathrm{RZ}(-)$.
\end{defin}

\begin{prop} \label{KZR-sheaf--prop}
The presheaves $\KZR(-)$ and $\KH^\mathrm{RZ}(-)$  are sheaves of spectra.  In particular, we get induced sheaves on the topological space $\RZ_Y(X)$.
\end{prop}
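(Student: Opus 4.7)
The plan is to verify the sheaf condition for $\KZR(-)$ and $\KH^\mathrm{RZ}(-)$ on the basis $\Open^\mathrm{qc}(\RZ_Y(X))$ of quasi-compact open subsets of the underlying topological space, which is spectral since it is a cofiltered limit of qcqs spectral spaces along quasi-compact transition maps \cite[ch.~0, 2.2]{fuji-kato}. Quasi-compactness then guarantees that every open cover of a quasi-compact open admits a finite refinement, so descent reduces to checking it for finite covers $V = \bigcup_{j=1}^n V_j$ in $\Open^\mathrm{qc}(\RZ_Y(X))$.

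The first step will be to lift such a cover to a single model. By definition of $\Model(-)$ and the cofilteredness of $\Mdf_Y(X)$, after passing to a common dominating model $X' \in \Mdf_Y(X)$ one obtains quasi-compact opens $V', V_1', \ldots, V_n' \subseteq X'$ pulling back to $V, V_1, \ldots, V_n$ respectively. I would then need to show that, after possibly replacing $X'$ by a further dominating modification, the relation $V' = \bigcup_{j=1}^n V_j'$ already holds in $X'$; since both sides are constructible subsets of qcqs schemes that agree in the cofiltered limit $\RZ_Y(X)$, they must agree at a finite stage. This coherence argument, relying on the cofiltered-limit description of the spectral space $\RZ_Y(X)$, is the step I expect to require the most care.

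Once the cover has been lifted, for every $X'' \in \Mdf_Y(X)_{/X'}$ the pullback of $V' = \bigcup_j V_j'$ is a finite Zariski cover by quasi-compact opens of the qcqs scheme $X''$. Zariski descent for non-connective algebraic K-theory on qcqs schemes \cite[Thm.~8.1]{tt90}, together with the analogous descent for $\KH$ (which follows since $\KH$ is the geometric realisation of $\K$ applied to the simplicial polynomial rings, and the sifted colimit commutes with the finite limits appearing in the \v{C}ech nerve), implies that the \v{C}ech nerve of the pulled-back cover is a limit diagram computing $\K(V'')$ respectively $\KH(V'')$.

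Finally, I would pass to the filtered colimit over the cofinal subposet $\Mdf_Y(X)_{/X'}$. Since filtered colimits in $\Sp$ commute with finite limits, the \v{C}ech nerve of $V = \bigcup_j V_j$ remains a limit diagram for $\KZR(V)$ respectively $\KH^\mathrm{RZ}(V)$, establishing descent on the basis of quasi-compact opens. The last assertion of the proposition then follows by right Kan extension along the inclusion $\Open^\mathrm{qc}(\RZ_Y(X)) \hookrightarrow \Open(\RZ_Y(X))$, which on the spectral space $\RZ_Y(X)$ upgrades a sheaf defined on this basis to a sheaf of spectra on all opens.
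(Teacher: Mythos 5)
Your proposal is essentially correct and proceeds by the same overall strategy as the paper: check descent on the basis of quasi-compact opens, lift a cover to a model, invoke Zariski descent for K-theory (resp.\ $\KH$) on qcqs schemes, and commute the filtered colimit past the finite limit. Two small points of comparison are worth noting, though. First, the paper reduces to the cd-structure of Mayer--Vietoris squares, so one only ever has to deal with a binary cover $V=V_1\cup V_2$; you work with arbitrary finite covers and their \v{C}ech nerves, which is fine but slightly heavier. Second, and more substantively, the step you flag as requiring the most care --- choosing models $V',V_1',\dots,V_n'$ independently on a common $X'$ and then arguing via constructibility (essentially EGA~IV~8.3.4 for the cofiltered limit $\RZ_Y(X)=\lim X'$) that $V'=\bigcup_j V_j'$ can be arranged at a finite stage --- can be bypassed entirely. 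One should not pick a model $V'$ of $V$ separately at all: once $V_1',V_2'$ are chosen on a common model, the union $V_1'\cup V_2'$ is automatically an element of $\Model(V)$ and $V_1'\cap V_2'$ an element of $\Model(V_1\cap V_2)$, since pullback commutes with union and intersection. This is what the paper does, and it makes the cofinality claim (the colimit of the resulting system of cartesian Mayer--Vietoris squares computes $(\spadesuit)$) transparent rather than contingent on a spreading-out lemma. Your constructibility argument is correct and would work, but it is extra machinery you do not need if you build the model of $V$ out of the models of the cover pieces rather than the other way around.
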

\begin{proof}
The topology on the category $\Open^\mathrm{qc}(\RZ_Y(X))$ equals the topology induced by the cd-structure of its cartesian squares. Hence it suffices to show that for any open subset $V$ of $\RZ_Y(X)$ which is covered by two open subsets $V_1,V_2\subset V$ with intersection $V_3 := V_1\cap V_2$ the induced square
	\carretag[$\spadesuit$]{\KZR(V)}{\KZR(V_1)}{\KZR(V_2)}{\KZR(V_3)}
is cartesian in $\Sp$. For $V_1'\in\Model(V_1)$ and $V_2'\in\Model(V_2)$ we may assume that they live on a common $Y$-modification of $X$. Then $V_1'\cup V_2' \in\Model(V)$ and $V_1'\cap V_2' \in\Model(V_3)$. By cofinality, the square ($\spadesuit$) is equivalent to a colimit of cartesian squares since K-theory is a Zariski-sheaf on the category of qcqs schemes. The statement for $ \KH^\mathrm{ZR}(-)$ has the same proof. Since the quasi-compact open subsets of $\RZ_Y(X)$ form a basis of the topology, the sheaves extend from $\Open^\mathrm{qc}(\RZ_Y(X))$ to $\Open(\RZ_Y(X))$.
\end{proof}

\begin{lemma} \label{KZR-stalks--lemma}
For $x\in\RZ_Y(X)$ the stalk of $\KZR_x$ is equivalent to $\K(\O_{\RZ_Y(X),x})$.
\end{lemma}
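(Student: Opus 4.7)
The plan is to write both sides as filtered colimits of K-theories of rings and then match them up, using that algebraic K-theory commutes with filtered colimits of rings.

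First, I would unfold the stalk. Since the quasi-compact open subsets of $\RZ_Y(X)$ form a basis of the topology,
\[
\KZR_x \simeq \colim_{x\in V\in\Open^\mathrm{qc}(\RZ_Y(X))} \KZR(V) \simeq \colim_{V\ni x}\colim_{V'\in\Model(V)}\K(V').
\]
By the definition of $\Model(V)$ together with the cited fact that $\Model(V)\neq\varnothing$ for qc~$V$, every qc open $V\ni x$ is of the form $p_{X'}\inv(V')$ for a $Y$-modification $X'\in\Mdf_Y(X)$ and a qc open $V'\subset X'$ containing the image $x_{X'}:=p_{X'}(x)$. Hence, restricting further to affine $V'$ (which is cofinal, since affine opens form a basis of each model $X'$), the nested colimit above is equivalent to a single filtered colimit indexed by the cofiltered category $\Ical_x$ whose objects are pairs $(X',V')$ with $X'\in\Mdf_Y(X)$ and $V'\subseteq X'$ an affine open containing $x_{X'}$, with morphisms the obvious restriction maps. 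Cofilteredness of $\Ical_x$ follows from cofilteredness of $\Mdf_Y(X)$.

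Second, I would commute $\K$ past the filtered colimit. For $(X',V')\in\Ical_x$ with $V'$ affine, one has $\K(V')=\K(\O_{X'}(V'))$, and since algebraic K-theory commutes with filtered colimits of rings,
\[
\KZR_x \simeq \colim_{(X',V')\in\Ical_x}\K(\O_{X'}(V')) \simeq \K\bigl(\colim_{(X',V')\in\Ical_x}\O_{X'}(V')\bigr).
\]
Finally, by construction of $\RZ_Y(X)$ as a limit in the category of locally ringed spaces, stalks of the structure sheaf are computed as colimits of the stalks in the models, so the inner colimit equals $\O_{\RZ_Y(X),x}$, proving the claim. The argument for $\KH^\mathrm{RZ}$ (if desired) is identical since $\KH$ also commutes with filtered colimits of rings.

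The main obstacle will be the bookkeeping in the first step: verifying rigorously that the pairs $(X',V')$ with $V'$ affine and $x_{X'}\in V'$ form a cofinal indexing system for the double colimit over pairs $(V,V')$ with $V\ni x$ qc and $V'\in\Model(V)$, and identifying the resulting colimit of rings with $\O_{\RZ_Y(X),x}$. Both are essentially formal consequences of $\RZ_Y(X)=\lim_{X'\in\Mdf_Y(X)}X'$ being taken in locally ringed spaces, together with the given non-emptiness of $\Model(V)$ for qc opens $V$, but they warrant being spelled out carefully.
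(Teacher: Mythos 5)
Your proposal is correct and follows essentially the same route as the paper's proof: unfold the stalk as an iterated colimit over pairs of a model $X'\in\Mdf_Y(X)$ and a quasi-compact (affine) open $V'\ni p_{X'}(x)$, rearrange by cofinality, commute $\K$ past filtered colimits, and identify the resulting colimit of rings with $\O_{\RZ_Y(X),x}$. The only cosmetic difference is that you collapse the two colimits into one over a category $\Ical_x$ of pairs, whereas the paper keeps them nested.
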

\begin{proof}
Since K-theory commutes with colimits, we can compute
\begin{align*}
\KZR_x 
& \simeq \colim_{x\in V}\KZR(V) \simeq \colim_{x\in V} \colim_{V'\in\Model(V)} \K(V') \\
& \simeq \colim_{X'\in\Mdf_Y(X)} \colim_{p_{X'}(x)\in V'} \K(\O_{X'}(V')) \\
& \simeq \colim_{X'\in\Mdf_Y(X)} \K(\O_{X',p_{X'}(x)}) \simeq \K(\O_{\RZ_Y(X),x})
\end{align*}
where the step from the first to the second line is due to cofinality of the indexing categories.
\end{proof}

\begin{prop} \label{KZR-homotopy-invariant-U-noetherian--prop}
Assume that all stalks of $Y$ are stably coherent regular rings and that $\Mdf_Y(X)$ admits a cofinal subcategory $\Mcal_d$ such that $\dim(X')\leq d$ for all $X'\in\Mcal_d$ for some $d\in\N$. Then the canonical maps
\begin{enumerate}
\item $\KZR(-) \To \KZR((-)[t_1,\ldots,t_k])$ and
\item $\KZR(-) \To \KH^\mathrm{ZR}(-)$
\end{enumerate}
are equivalences of spectrum-valued sheaves on $\RZ_Y(X)$.
\end{prop}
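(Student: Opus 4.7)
The plan is to reduce both statements to equivalences on stalks and then invoke Corollary~\ref{K-semi-valuation-ring--cor}. By Proposition~\ref{KZR-sheaf--prop} (and its obvious variant for $\KZR((-)[t_1,\ldots,t_k])$, proved by the same argument), all four presheaves involved are sheaves of spectra on $\RZ_Y(X)$, so checking an equivalence of sheaves reduces to checking it stalk-wise at every point $x\in\RZ_Y(X)$.

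First I would compute the stalks. The argument of Lemma~\ref{KZR-stalks--lemma}, which only uses that non-connective algebraic K-theory and homotopy K-theory both commute with filtered colimits of rings, identifies $\KZR_x\simeq\K(A^+)$, identifies the stalk of $\KZR((-)[t_1,\ldots,t_k])$ at $x$ with $\K(A^+[t_1,\ldots,t_k])$, and identifies $\KH^\mathrm{RZ}_x\simeq\KH(A^+)$, where $A^+:=\O_{\RZ_Y(X),x}$. By Lemma~\ref{ZR-stalks-are-semi-valuation-rings--lem} the ring $A^+$ is a semi-valuation ring, so Corollary~\ref{K-semi-valuation-ring--cor}(ii) and (iii) will close the proof as soon as its semi-fraction ring $A:=A^+_\pfrak$ is shown to be stably coherent and regular.

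The heart of the argument is therefore to identify $A$ with a stalk of $Y$. Using Temkin's description of points of $\RZ_Y(X)$ (\cite[\S 2.2]{temkin-relative-zr}), each such $x$ determines canonically a point $y\in Y$ together with a valuation ring in the residue field $\kappa(y)$, and the local ring $\O_{\RZ_Y(X),x}$ fits into a Milnor square $(\square)$ (as in Lemma~\ref{semi-valuation-rings-characterisation--lem}) whose upper-right corner is $\O_{Y,y}$. The dimension bound on $\Mcal_d$ enters precisely here: the stalk $A^+\cong\colim_{X'\in\Mcal_d}\O_{X',p_{X'}(x)}$ is a filtered colimit of local rings of uniformly bounded Krull dimension, and this boundedness is what makes Temkin's description of the limit point yield a well-defined $y\in Y$ with $A\cong\O_{Y,y}$. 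The hypothesis that all stalks of $Y$ are stably coherent regular then supplies the required property of $A$.

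The main obstacle I expect is this identification step: one must translate \cite[Prop.~2.2.1]{temkin-relative-zr} into the language of semi-valuation rings via Lemma~\ref{semi-valuation-rings-characterisation--lem}, check its functoriality along the cofiltered diagram $\Mcal_d$, and verify that passing to the colimit commutes with forming the semi-fraction ring. Once that is in place, the two stalk-wise equivalences are literal instances of Corollary~\ref{K-semi-valuation-ring--cor}(ii) and (iii), and the proposition follows.
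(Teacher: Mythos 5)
Your proposal has a genuine gap at the very first step, and it also misattributes the role of the dimension hypothesis.

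You write that since (by Proposition~\ref{KZR-sheaf--prop}) all four presheaves are sheaves of spectra, ``checking an equivalence of sheaves reduces to checking it stalk-wise.'' This is false in general: for sheaves of \emph{spectra} on a topological space, stalk-wise equivalence does not imply equivalence unless the ambient $\infty$-topos is hypercomplete (equivalently here, Postnikov complete), because a sheaf of spectra is only determined by its Postnikov tower when the tower converges. This is precisely where the hypothesis that $\Mdf_Y(X)$ has a cofinal subcategory $\Mcal_d$ of dimension $\leq d$ enters: the paper uses a result of Clausen--Mathew to bound the homotopy dimension of $\Sh(X')$ for each $X'\in\Mcal_d$, passes to the cofiltered limit to bound the homotopy dimension of $\Sh(\RZ_Y(X))$ (and of each quasi-compact open), concludes that this topos is locally of finite homotopy dimension and hence Postnikov complete, and only then invokes the folklore fact that, in a Postnikov-complete topos, equivalences of spectrum-valued sheaves can be detected on stalks. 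Without this argument your reduction to stalks is unjustified.

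Consequently your explanation of where the dimension bound is used is also off: you locate it in the identification of the semi-fraction ring $A$ of $\O_{\RZ_Y(X),x}$ with a stalk $\O_{Y,y}$, but that identification comes from Temkin's description of the points (the same \cite[Prop.~2.2.1]{temkin-relative-zr} underlying Lemma~\ref{ZR-stalks-are-semi-valuation-rings--lem}) and does not need any bound on Krull dimension. The rest of your plan --- computing stalks via the filtered-colimit argument of Lemma~\ref{KZR-stalks--lemma}, noting that $\K$ and $\KH$ commute with filtered colimits of rings, identifying the semi-fraction ring of the stalk with $\O_{Y,y}$ so that the stably-coherent-regular hypothesis applies, and then quoting Corollary~\ref{K-semi-valuation-ring--cor}(ii),(iii) --- is correct and matches the paper. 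But you must first establish Postnikov completeness of the topos using the dimension bound; that is the actual content of the finiteness hypothesis.
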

\begin{proof} 
For every $X'\in\Mcal_d$, the sheaf topos $\Sh(X')$ of space-valued sheaves on $X'$ has homotopy dimension $\leq d$ by a result of Clausen-Mathew \cite[3.12]{clausen-mathew}. This implies that $\Sh(\RZ_Y(X))$ has homotopy dimension $\leq d$ \cite[3.11]{clausen-mathew}. Analogously, every quasi-compact open subset $V$ of $\RZ_Y$ is a cofiltered limit of schemes of finite dimension so that $\Sh(V)$ has homotopy dimension $\leq d$. Thus $\Sh(\RZ_Y(X))$ is locally of homotopy dimension $\leq d$, hence Postnikov complete \cite[7.2.1.10]{htt}. Since the $\infty$-category $\Sh_\Sp(\RZ_Y(X))$ is equivalent to the category $\Sh_\Sp(\Sh(\RZ_Y(X)))$ \cite[1.3.1.7]{sag}, it is left-complete \cite[1.3.3.10,1.3.3.11]{sag}. Thus we can check equivalences of sheaves of spectra on $\RZ_Y(X)$ on stalks (this is folklore, see \cite[A.1.32]{phd}).

Hence we can check the statements (i) and (ii) on the stalks $\O_{\RZ_Y(X),x}$ which are semi-valuation rings (Lemma~\ref{ZR-stalks-are-semi-valuation-rings--lem}) so that the claim follows from Lemma~\ref{KZR-stalks--lemma} and Corollary~\ref{K-semi-valuation-ring--cor}.
\end{proof}

\begin{cor} \label{generalisation--cor}
Assume that $Y$ is of finite dimension, that all its stalks are stably coherent regular rings, and that the morphism $f\colon Y\to X$ admits a compactification (e.g.\ $f$ is of finite type). Then the properties (i) and (ii) of Proposition~\ref{KZR-homotopy-invariant-U-noetherian--prop} hold true.
\end{cor}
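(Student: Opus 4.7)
The plan is to reduce the corollary to Proposition~\ref{KZR-homotopy-invariant-U-noetherian--prop}, whose stalk hypothesis matches the one here verbatim. What remains is to exhibit a cofinal subcategory $\Mcal_d\subseteq\Mdf_Y(X)$ with $\dim(X')\leq d$ for some fixed $d\in\N$ and all $X'\in\Mcal_d$.

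First I would unpack the compactification: the assumption supplies an open immersion $j\colon Y\hookrightarrow\bar{Y}$ and a proper morphism $\bar{f}\colon\bar{Y}\to X$ with $\bar{f}\circ j=f$. After replacing $\bar{Y}$ by the scheme-theoretic image of $j$ we may assume $Y$ is schematically dense in $\bar{Y}$, so $\bar{Y}\to X$ itself lies in $\Mdf_Y(X)$. Next I would show that the slice $\Mdf_Y(X)_{/\bar{Y}}$ is cofinal in $\Mdf_Y(X)$: given $X_i\in\Mdf_Y(X)$, form the scheme-theoretic closure $Z_i$ of the image of $Y$ inside the fibre product $X_i\times_X\bar{Y}$. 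Then $Z_i\to X_i$ is proper (closed subscheme of a proper $X_i$-scheme), $Y\to Z_i$ is schematically dominant by construction, so $Z_i\to X$ is a $Y$-modification refining $X_i$; moreover the other projection $Z_i\to\bar{Y}$ is proper and an isomorphism over $Y$, since the graph of $Y\to X_i$ identifies with $Y$ as a closed subscheme of $X_i\times_X Y$.

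Within $\Mdf_Y(X)_{/\bar{Y}}$ I would then invoke Raynaud--Gruson's flattening theorem, in the qcqs form used by Temkin: the admissible blow-ups of $\bar{Y}$ --- blow-ups along finitely generated quasi-coherent ideals supported in $\bar{Y}\setminus Y$ --- form a cofinal subcategory among $\bar{Y}$-modifications that are isomorphisms over $Y$, and any such blow-up $X'\to\bar{Y}$ satisfies $\dim(X')\leq\dim(\bar{Y})$. Composing with the cofinality established above yields the required $\Mcal_d\subseteq\Mdf_Y(X)$ with $d:=\dim(\bar{Y})$, and Proposition~\ref{KZR-homotopy-invariant-U-noetherian--prop} then delivers both (i) and (ii).

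The principal obstacle is the finiteness of $\dim(\bar{Y})$: in the non-Noetherian qcqs setting, finite-dimensionality of $Y$ does not automatically propagate to a dense compactification, as $\bar{Y}$ may acquire longer chains of primes in the closed complement $\bar{Y}\setminus Y$. However, when $f$ is separated of finite type --- the case covered by the parenthetical hypothesis --- Nagata's theorem allows us to take $\bar{Y}$ of finite type over $X$, and a relative-dimension argument combining the finite-type fibre bound with $\dim(Y)<\infty$ then yields $\dim(\bar{Y})<\infty$, completing the reduction.
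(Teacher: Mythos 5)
The paper's proof of this corollary is the one-liner ``This follows since $\dim(X')\leq\dim(Y)$ for every $X'\in\Mdf_Y(X)$ by the assumptions.'' Your proposal takes a genuinely different and much more elaborate route: you reduce to $Y$-modifications factoring through a fixed compactification $\bar{Y}$ (a cofinality argument via scheme-theoretic closures in fibre products), then invoke Raynaud--Gruson flattening to pass to admissible blow-ups of $\bar{Y}$, and finally try to bound $\dim(\bar{Y})$. This is more work than the paper performs, but you also correctly spot something the paper glosses over: the claim $\dim(X')\leq\dim(Y)$ is \emph{not} a consequence of the stated hypotheses. Already the initial object $X=\bar{Y}\in\Mdf_Y(X)$ can have $\dim(\bar{Y})>\dim(Y)$ --- take $Y=\Spec(\Q)\hookrightarrow X=\Spec(\Z)$, where $\Spec(\Z)$ is itself a $Y$-modification of $\Spec(\Z)$ with dimension $1>0=\dim(Y)$. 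For a dense open $Y\subset\bar{Y}$ one only gets $\dim(Y)\leq\dim(\bar{Y})$, which is the inequality in the wrong direction. So the paper's justification, taken literally, is incorrect, and the bound one should really aim for is $\dim(X')\leq\dim(\bar{Y})$ plus the finiteness of $\dim(\bar{Y})$, which is exactly what your argument is structured around.

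That said, your proposed closing step does not close the gap either. The claim that ``Nagata plus a relative-dimension argument yields $\dim(\bar{Y})<\infty$'' does not follow: finite-typeness of $\bar{Y}\to X$ gives no control over $\dim(\bar{Y})$ if $\dim(X)=\infty$. Concretely, take $V$ a valuation ring of infinite rank admitting a height-one prime, $K=\Frac(V)$, $Y=\Spec(K)$, $X=\Spec(V)$, and $f$ the open immersion. Then $f$ is separated of finite type, $\dim(Y)=0$, the stalk of $Y$ is a field, yet the only compactification up to $Y$-modification is $\bar{Y}=\Spec(V)$ with $\dim(\bar{Y})=\infty$, so no cofinal $\Mcal_d$ of bounded dimension exists and Proposition~\ref{KZR-homotopy-invariant-U-noetherian--prop} cannot be invoked. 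The same example defeats the paper's one-line argument. So both proofs have a gap at the same place --- finiteness of the Krull dimension of a compactification (equivalently, the existence of a cofinal $\Mcal_d$) does not follow from finite-dimensionality of $Y$ alone --- but you deserve credit for isolating the precise point of failure, which the paper's proof does not even flag, and for the genuinely useful cofinality reductions to $\Mdf_Y(X)_{/\bar{Y}}$ and then to admissible blow-ups of $\bar{Y}$, which are sound intermediate steps.
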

\begin{proof}
This follows since $\dim(X')\leq\dim(Y)$ for every $X'\in\Mdf_Y(X)$ by the assumptions.
\end{proof}

\begin{remark} \label{comparison--rem}
If the morphism $f$ is the inclusion of a schematically dense open subscheme $Y$ of $X$, the space $\RZ_Y(X)$ is isomorphic to the admissible Zariski-Riemann space $\skp{X}_Y$ \cite[2.7]{k-admissible-zr}.
In case that the morphism $f\colon Y\to X$ admits a compactification $Y \inj \bar{X} \to X$ (e.g.\ if $f$ is of finite type), then the canonical morphism $\skp{\bar{X}}_Y\cong\RZ_Y(\bar{X})\to\RZ_Y(X)$ is an isomorphism. If moreover $\bar{X}$ is noetherian, then Proposition~\ref{KZR-homotopy-invariant-U-noetherian--prop} with $k=1$ already follows from previous work of the author \cite[Cor.~4.18]{k-admissible-zr}. 
\end{remark}

\bibliography{0-literature}
\bibliographystyle{amsalpha}
\end{document}